\newcommand{\frag}[3][f]{
\ifthenelse{\equal{#1}{f}}
{
    \psfrag{#2}{\footnotesize#3}
}{
    \ifthenelse{\equal{#1}{s}}
    {
        \psfrag{#2}{\small#3}
    }{
        \ifthenelse{\equal{#1}{l}}
        {
            \psfrag{#2}{\large#3}
        }{
            \ifthenelse{\equal{#1}{ss}}
            {
                \psfrag{#2}{\scriptsize#3}
            }{
                \ifthenelse{\equal{#1}{n}}
                {
                    \psfrag{#2}{#3}
                }{
                }
            }
        }
    }
}
}
\newcommand{\hess}{\mathrm{Hess}}
\newtheorem{neu}{}[section]
\newtheorem{Cor}[neu]{Corollary}
\newtheorem*{Cor*}{Corollary}
\newtheorem{Thm}[neu]{Theorem}
\newtheorem*{Thm*}{Theorem}
\newtheorem{Prop}[neu]{Proposition}
\newtheorem*{Prop*}{Proposition}
\newtheorem{Lemma}[neu]{Lemma}
\theoremstyle{definition}
\newtheorem*{Rmk*}{Remark}
\newtheorem{Rmk}[neu]{Remark}
\newtheorem*{Ex*}{Example}
\newtheorem*{Qu*}{Question}
\newcommand{\N}{\mathbb{N}}
\newcommand{\Z}{\mathbb{Z}}
\newcommand{\R}{\mathbb{R}}
\newcommand{\pf}{\longrightarrow}
\newcommand{\CZ}{\mu_{\mathrm{CZ}}}
\newcommand{\Morse}{\mu_{\mathrm{Morse}}}
\newcommand{\om}{\omega}
\newcommand{\Poincare}{Poincar\'{e} }
\newcommand{\B}{\mathcal{B}}
\newcommand{\CCC}{\mathcal{C}}
\renewcommand{\L}{\mathscr{L}}
\newcommand{\Ham}{\mathrm{Ham}}
\newcommand{\Crit}{\mathrm{Crit}}
\newcommand{\supp}{\mathrm{supp}}
\newcommand{\beq}{\begin{equation}}
\newcommand{\beqn}{\begin{equation}\nonumber}
\newcommand{\eeq}{\end{equation}}
\newcommand{\bea}{\begin{equation}\begin{aligned}}
\newcommand{\bean}{\begin{equation}\begin{aligned}\nonumber}
\newcommand{\eea}{\end{aligned}\end{equation}}
\numberwithin{equation}{section}
\definecolor{green}{rgb}{0,.7,0}
\definecolor{blue}{rgb}{0,0,1}
\definecolor{red}{rgb}{1,0,0}
\newcommand{\p}{\partial}
\begin{document}
\title{Periodic bounce orbits of prescribed energy}
\author{Peter Albers}
\author{Marco Mazzucchelli}
\address{
    Peter Albers\\
    Department of Mathematics\\
    Purdue University}
\email{palbers@math.purdue.edu}
\address{
    Marco Mazzucchelli\\
    Max Planck Institute for Mathematics in the Sciences\\
    Leipzig}
\email{mazzucch@mis.mpg.de}
\keywords{period bounce orbits, approximation scheme, Reeb orbits}
\subjclass[2010]{34C25, 57R17, 37J10}
\date{May 24, 2010}
\begin{abstract}
We prove the existence of periodic bounce orbits of prescribed energy on an open bounded domain in $\R^N$. We derive explicit bounds on the period and the number of bounce points. 
\end{abstract}
\maketitle

\section{Introduction}

Throughout this article we fix an open, bounded domain $\Omega\subset\R^N$ with smooth boundary and a smooth function $V\in C^\infty(\overline{\Omega}).$\footnote{We expect that all results remain true if $C^\infty$ is replaced by $C^2$.} We study periodic bounce orbits of the Lagrangian system given by
\bea
L:T\overline\Omega=\overline{\Omega}\times\R^N&\to\R\\
(q,v)&\mapsto\tfrac12|v|^2-V(q)
\eea
that is, continuous and piecewise smooth maps $\gamma:\R/\tau\Z\to\overline{\Omega}$, $\tau>0$,  satisfying the following. There exists a (possibly empty) finite subset $\B\subset[0,\tau]$ such that
\begin{enumerate}
\item $\gamma$ solves the Euler-Lagrange equation
\beq\label{eqn:2nd_order_ODE}
\gamma''(t)+\nabla V(\gamma(t))=0\quad\forall t\not\in\B\;
\eeq
\item for each $t\in\B$ we have $\gamma(t)\in\p\Omega$, the left resp.~right derivatives 
\beq
\gamma'(t^\pm):=\lim_{s\to t^\pm}\gamma'(s)
\eeq
exist and $\gamma$ satisfies the law of reflection
\bea\label{e:law_of_reflection}
\big\langle \gamma'(t^+), \nu(\gamma(t)) \big\rangle&=-\big\langle \gamma'(t^-), \nu(\gamma(t))\big\rangle\neq0\;,\\
\gamma'(t^+)-\big\langle \gamma'(t^+), \nu(\gamma(t))\big \rangle\cdot \nu(\gamma(t))&=\gamma'(t^-)-\big\langle \gamma'(t^-), \nu(\gamma(t))\big\rangle \cdot\nu(\gamma(t))\;,
\eea
where $\nu$ is the outer normal to $\p\Omega$.
\end{enumerate}

\begin{Rmk}$ $
\begin{itemize}
\item The times $t\in\B$ are called bounce times and $\gamma(t)$ bounce points. In case $V$ is a constant function bounce orbits are billiard trajectories, see \cite{Kozlov_Treshchev_Billiards,Tabachnikov_Geometry_and_billiards} for more details on billiards. 
\item A periodic bounce orbit with $\B=\emptyset$ is a smooth periodic solution of \eqref{eqn:2nd_order_ODE}.
\item For a periodic bounce orbit $\gamma$ the energy
\beq
E(\gamma):=\tfrac12|\gamma'(t)|^2+V(\gamma(t))
\eeq
is an integral of motion, namely it is independent of $t\not\in\B$.
\end{itemize}
\end{Rmk}

\begin{Thm}\label{thm:main}
For all $E>\max_{\overline{\Omega}}V$ there exists a periodic bounce orbit $\gamma:\R/\tau\Z\pf\overline{\Omega}$ with energy $E(\gamma)=E$, at most $\dim\Omega+1$ bounce points, and period bounded as follows
\beq\label{eqn:period_inequality}
\tau\leq C\,\mathrm{diam}(\Omega)\,\frac{\big(E-{\textstyle\min_{\overline\Omega}}V\big)^{5/2}}{(E-\textstyle\max_{\overline\Omega}V)^3}\;,
\eeq
where $C$ is a constant independent of $\Omega$, $V$ and $E$ $($see Propositions~\ref{prop:uniform_contact_type_estimate} and~\ref{prop:approx_sequence} for an explicit estimate for $\tau$$)$.
\end{Thm}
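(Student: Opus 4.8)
The plan is to realize periodic bounce orbits of energy $E$ as critical points of the Maupertuis–Jacobi functional, obtained after a Legendre/reduction step, and then to attack the non-smoothness at the boundary by an approximation scheme in which $\Omega$ is replaced by a sequence of Riemannian metrics on $\R^N$ that degenerate to the ``billiard'' situation. More precisely, for $E>\max_{\overline\Omega}V$ the Jacobi metric $g_E:=2(E-V)\,g_{\mathrm{eucl}}$ is positive on $\overline\Omega$, and unparametrized solutions of \eqref{eqn:2nd_order_ODE} of energy $E$ are exactly $g_E$-geodesics; bounce orbits correspond to closed $g_E$-billiard trajectories in $\overline\Omega$. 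The first step is therefore to reformulate Theorem~\ref{thm:main} as an existence statement for short closed billiard trajectories with few bounce points, where ``short'' is measured in the $g_E$-length.

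Next I would set up the approximation. Fix a smooth convex-type exhaustion of a neighbourhood of $\overline\Omega$ and choose metrics $g_k$ on $\R^N$ (or on a large ball) which agree with $g_E$ on $\overline\Omega$ and whose geodesics are forced to turn sharply back into $\Omega$ near $\p\Omega$ — concretely one thickens the boundary by a collar of width $1/k$ and puts a very large conformal factor there, so that in the limit $k\to\infty$ the geodesic flow converges to the billiard flow (this is the standard ``hard wall'' limit; it is exactly the content the paper isolates in Propositions~\ref{prop:uniform_contact_type_estimate} and~\ref{prop:approx_sequence}). On each closed manifold obtained by doubling or capping off, one applies the existence of a closed geodesic — here I would use the Lusternik–Schnirelmann / Birkhoff minimax over the space of based loops, or equivalently the fact that on a suitable compact region the shortest closed billiard-type trajectory exists — and one records two quantities: an a priori upper bound on its $g_k$-length and an a priori upper bound on the number of times it meets the collar. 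The length bound is where the explicit constant in \eqref{eqn:period_inequality} comes from: the $g_E$-diameter of $\overline\Omega$ is at most $\sqrt{2(E-\min V)}\,\mathrm{diam}(\Omega)$, a minimax closed geodesic has length comparable to this diameter, and converting $g_E$-length back to time introduces a factor controlled by $\big(E-\min V\big)\big/\big(E-\max V\big)$ with the stated powers after keeping track of the reparametrization $ds=\sqrt{2(E-V)}\,dt$ and of the normalization $\tfrac12|\gamma'|^2=E-V$.

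Then comes the compactness/limit step: the trajectories $\gamma_k$ have uniformly bounded $g_E$-length, uniformly bounded energy, and uniformly bounded number of bounce points (say $\le N+1$, obtained by a dimension count — a closed billiard trajectory with more than $N+1$ reflections can be shortened, since the reflection points then lie in an affine subspace configuration that is non-generic), so by Arzelà–Ascoli and elliptic/ODE estimates away from the collar a subsequence converges in $C^0$, and $C^1_{\mathrm{loc}}$ away from bounce times, to a continuous piecewise-smooth loop $\gamma$. One checks that $\gamma$ is non-constant (the length lower bound, which must be ensured in the minimax, prevents collapse), that on the complement of the limiting bounce set it solves \eqref{eqn:2nd_order_ODE} with energy $E$, and that at each bounce time it satisfies the law of reflection \eqref{e:law_of_reflection} — the latter is precisely the statement that the hard-wall limit of the geodesic flow is the billiard flow, and the fact that the reflections are transverse (the inequality $\neq 0$ in \eqref{e:law_of_reflection}) follows from the energy being strictly above $\max V$, so that $\gamma$ never has zero speed and in particular cannot graze the boundary.

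The main obstacle is the last step: controlling the behaviour of $\gamma_k$ inside the shrinking collar and proving that no length, and no ``bounce budget'', is lost in the limit — i.e.\ that the limit really is a genuine bounce orbit and not a degenerate object that sticks to $\p\Omega$ along an arc, and that the count of bounce points does not blow up. This requires a quantitative convexity/turning estimate for $g_k$-geodesics entering the collar (a uniform bound on the arclength spent in the collar per excursion, and a uniform lower bound on the angle of incidence forcing a clean reflection), which is exactly why the theorem statement defers to Propositions~\ref{prop:uniform_contact_type_estimate} and~\ref{prop:approx_sequence} for the explicit $\tau$: those propositions are where the uniform contact-type condition on the hypersurface $\{\tfrac12|p|^2+V=E\}$ is used to make the hard-wall limit quantitative. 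Granting those two propositions, assembling the pieces above yields a periodic bounce orbit of energy exactly $E$, with at most $\dim\Omega+1=N+1$ bounce points, and with period obeying \eqref{eqn:period_inequality}.
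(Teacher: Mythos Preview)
Your overall architecture --- approximate the billiard by a smooth problem, find critical points for the approximation, pass to the limit --- is indeed what the paper does. But the crucial step, bounding the number of bounce points by $N+1$, is where your proposal has a genuine gap.

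You write that the bound comes from a ``dimension count --- a closed billiard trajectory with more than $N+1$ reflections can be shortened, since the reflection points then lie in an affine subspace configuration that is non-generic''. This is not correct: closed billiard trajectories can have arbitrarily many bounce points (iterates of a $2$-bounce orbit already do), and there is no shortening procedure of the kind you describe. What the paper actually proves (Proposition~\ref{prop:bounde_Morse_index}) is that each point of $\supp(\mu)$ contributes at least $1$ to the Morse index of the approximating critical point $\Gamma_\epsilon$, by exhibiting a locally supported variation $\Psi_\epsilon$ along which the Hessian is negative. So the bounce bound reduces to producing approximating critical points of \emph{uniformly bounded Morse index}. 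This is the heart of the matter, and your minimax sketch does not address it: a Birkhoff/Lusternik--Schnirelmann closed geodesic on a capped-off manifold carries no a priori index bound of the required type.

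The paper's mechanism for the index bound is symplectic rather than variational. One passes to the Hamiltonian side, shows $\Sigma_\epsilon=\{K_\epsilon=E\}$ is of restricted contact type with a quantitative lower bound $\lambda_\epsilon(X_\epsilon)\geq\Lambda(E)$ (Proposition~\ref{prop:uniform_contact_type_estimate}), and then invokes Schlenk's theorem that a displaceable contact hypersurface carries a Reeb orbit of period at most the displacement energy $e(\Sigma_\epsilon)$. The period bound \eqref{eqn:period_inequality} comes from $\tau_\epsilon\leq e(\Sigma_\epsilon)/\Lambda(E)$ together with the elementary estimate $e(\Sigma_\epsilon)\leq 2\sqrt{2(E-\min V)}\,\mathrm{diam}(\Omega)$. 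The index bound $\CZ(v_\epsilon)\in\{N,N+1\}$ is then extracted from the Rabinowitz--Floer argument underlying Schlenk's theorem (a Floer trajectory connects $v_\epsilon$ to a maximum on $\Sigma_\epsilon$, and the grading formula pins down $\CZ$), and Viterbo's theorem $\Morse=\CZ$ transports this back to the Lagrangian side. None of this is captured by your Jacobi-metric picture; in particular, Proposition~\ref{prop:uniform_contact_type_estimate} is not about making the hard-wall limit quantitative, but about converting the Reeb period (which is what displacement energy bounds) into the Hamiltonian period $\tau_\epsilon$.

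A smaller issue: transversality of reflections does not follow merely from $|\gamma'|\neq 0$ --- nonzero speed does not rule out tangential incidence. In the paper this comes out of the measure-theoretic analysis in Proposition~\ref{prop:Benci_improved}(iii), where $\mu(\{t\})>0$ forces $\langle\nu(\gamma(t)),\gamma'(t^+)\rangle\neq 0$.
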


\begin{Rmk}
In dimension two the bound on the number of bounce points in sharp in general. In fact, already for billiard trajectories there are domains $\Omega\subset\R^2$ where every billiard trajectory has at least three bounce points, see for instance \cite[Figure 6.6]{Tabachnikov_Geometry_and_billiards}. It is conceivable that the bound on the number of bounce points is also sharp in higher dimension.
\end{Rmk}

\begin{Cor}\label{cor:main}
If in Theorem~\ref{thm:main} we further require
\beq
E(\gamma)> 
\textstyle\max_{\overline\Omega}V
+
\tfrac12\mathrm{diam}(\overline\Omega)\textstyle\max_{\overline\Omega}|\nabla V|
\eeq
then the periodic bounce orbit $\gamma$ has at least one bounce point.
\end{Cor}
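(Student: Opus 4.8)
The proof of Corollary~\ref{cor:main} should be a short argument by contradiction using the period bound from Theorem~\ref{thm:main}. The plan is to assume that the periodic bounce orbit $\gamma$ produced by the theorem has $\B=\emptyset$, i.e. it is a smooth closed solution of the Euler--Lagrange equation \eqref{eqn:2nd_order_ODE} on $\R/\tau\Z$ that stays in the open domain $\Omega$, and then derive a contradiction with the energy assumption. A smooth periodic orbit must in particular have a point where the velocity vanishes or, more usefully, the closed curve $\gamma$ cannot run off to the boundary, so its image is a compact subset of $\Omega$; integrating the second-order ODE over one period gives $\int_0^\tau \nabla V(\gamma(t))\,dt = -\int_0^\tau \gamma''(t)\,dt = 0$, which by itself is not enough, so instead I would extract a point where the orbit ``turns around''.

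Concretely, since $\gamma:\R/\tau\Z\to\Omega$ is smooth and periodic, the smooth function $t\mapsto \langle \gamma(t), e\rangle$ for a fixed unit vector $e$ attains a maximum, so there is a time $t_0$ with $\gamma'(t_0)=0$ (taking $e$ in the direction realizing $\max\langle\gamma,e\rangle$ actually only gives $\langle\gamma'(t_0),e\rangle=0$; to get $\gamma'(t_0)=0$ one instead notes that the speed $|\gamma'|$ itself, being smooth and periodic, attains a minimum, and if that minimum were positive the argument via $e$ still forces a contradiction — more cleanly, pick $t_0$ realizing $\max_t |\gamma(t)|^2$, so that $\tfrac{d}{dt}\big|_{t_0}|\gamma(t)|^2 = 2\langle\gamma(t_0),\gamma'(t_0)\rangle = 0$ and $\tfrac{d^2}{dt^2}\big|_{t_0}|\gamma(t)|^2 \le 0$, i.e. $|\gamma'(t_0)|^2 + \langle\gamma(t_0),\gamma''(t_0)\rangle \le 0$, hence $|\gamma'(t_0)|^2 \le -\langle\gamma(t_0),\gamma''(t_0)\rangle = \langle\gamma(t_0),\nabla V(\gamma(t_0))\rangle$). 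The cleanest route, which I would actually write up, is to use the time $t_0$ at which $|\gamma'|$ is minimal: since $\int_0^\tau |\gamma'(t)|^2\,dt$ controls things, but more directly, at a critical point of $|\gamma'|^2$ we have $\langle \gamma''(t_0),\gamma'(t_0)\rangle = 0$; this forces $\langle \nabla V(\gamma(t_0)),\gamma'(t_0)\rangle=0$, which is automatic from energy conservation and not helpful. So I commit to the ``diameter'' argument: choose $t_0$ where $t\mapsto|\gamma(t)-p|^2$ is maximal for a suitable fixed $p$ (e.g. $p$ a point of $\overline\Omega$), giving $\langle\gamma(t_0)-p,\gamma'(t_0)\rangle=0$ and $|\gamma'(t_0)|^2 \le \langle\gamma(t_0)-p,\nabla V(\gamma(t_0))\rangle \le |\gamma(t_0)-p|\cdot|\nabla V(\gamma(t_0))| \le \mathrm{diam}(\overline\Omega)\max_{\overline\Omega}|\nabla V|$.

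Combining with energy conservation at $t_0$ then gives
\beq
E = E(\gamma) = \tfrac12|\gamma'(t_0)|^2 + V(\gamma(t_0)) \le \tfrac12\,\mathrm{diam}(\overline\Omega)\max_{\overline\Omega}|\nabla V| + \max_{\overline\Omega}V\,,
\eeq
which directly contradicts the standing hypothesis $E > \max_{\overline\Omega}V + \tfrac12\mathrm{diam}(\overline\Omega)\max_{\overline\Omega}|\nabla V|$. Hence $\B\neq\emptyset$, i.e. $\gamma$ has at least one bounce point.

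The only genuinely delicate point is justifying that the orbit $\gamma$ from Theorem~\ref{thm:main}, if it has no bounce points, has image contained in the \emph{open} domain $\Omega$ (so that it is honestly a closed orbit of the unconstrained system and the maximum-point computation with the second derivative test is valid on all of $\R/\tau\Z$). This is immediate from the definition: a periodic bounce orbit with $\B=\emptyset$ is by the second bullet of the Remark a smooth periodic solution of \eqref{eqn:2nd_order_ODE}, and the law of reflection condition is what would be required at any boundary hit, so with $\B=\emptyset$ there simply are no times where $\gamma(t)\in\p\Omega$ is forced — but one should check that a smooth solution of the ODE tangent to or touching $\p\Omega$ without reflecting is excluded; since $\gamma$ is required to be a map into $\overline\Omega$ solving the ODE everywhere, if $\gamma(t_1)\in\p\Omega$ for some $t_1$ then $\gamma$ stays in $\overline\Omega$ near $t_1$, and a short geometric argument (using that $\Omega$ is open with smooth boundary and $\gamma$ has definite speed by the energy condition $E>\max V$) shows this can happen only as a genuine reflection or tangency; tangencies can be ruled out or absorbed, and in any case the statement only claims ``at least one bounce point'', so it suffices to observe that if $\gamma$ never meets $\p\Omega$ the above contradiction applies, while if it does meet $\p\Omega$ we are already done. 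I therefore do not expect this to be an obstacle, just a remark to include for rigor.
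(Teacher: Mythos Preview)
Your final ``diameter'' argument is correct and is a genuinely different route from the paper's. The paper proceeds by a Taylor-type estimate: assuming $\B=\emptyset$, so $\gamma$ is a smooth periodic solution of $\gamma''+\nabla V(\gamma)=0$ with image in $\overline\Omega$, one writes
\[
\mathrm{diam}(\overline\Omega)\;\geq\;|\gamma(t)-\gamma(0)|\;\geq\;|\gamma'(0)|\,|t|\;-\;\int_0^t\!\!\int_0^s|\nabla V(\gamma(r))|\,dr\,ds,
\]
bounds $|\gamma'(0)|$ from below via energy conservation and the double integral from above by $\tfrac12(\max_{\overline\Omega}|\nabla V|)\,t^2$, and concludes from the resulting quadratic inequality in $t$ (valid for all $t$) that $E\leq \max_{\overline\Omega}V+\tfrac12\,\mathrm{diam}(\overline\Omega)\max_{\overline\Omega}|\nabla V|$; the case $V\equiv\mathrm{const}$ is handled separately beforehand. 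Your argument --- maximize $|\gamma(t)-p|^2$ for any fixed $p\in\overline\Omega$, use the second-derivative test together with the ODE to get $|\gamma'(t_0)|^2\leq\langle\gamma(t_0)-p,\nabla V(\gamma(t_0))\rangle\leq\mathrm{diam}(\overline\Omega)\max_{\overline\Omega}|\nabla V|$, then plug into $E=\tfrac12|\gamma'(t_0)|^2+V(\gamma(t_0))$ --- reaches the same inequality in one step and automatically covers the constant-potential case (there it gives $|\gamma'(t_0)|=0$, hence $E=V(\gamma(t_0))\leq\max_{\overline\Omega}V$). So your approach is at least as clean and slightly more efficient.

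Two comments on presentation. First, the several false starts (critical points of $\langle\gamma,e\rangle$, of $|\gamma'|$, etc.) should be cut; commit directly to the maximum of $|\gamma(t)-p|^2$. Second, your last paragraph's worry is unnecessary: your second-derivative computation uses only that $\gamma:\R/\tau\Z\to\overline\Omega$ is smooth and satisfies $\gamma''=-\nabla V(\gamma)$ everywhere, both of which are immediate from $\B=\emptyset$. Whether or not $\gamma$ happens to touch $\partial\Omega$ tangentially is irrelevant --- the function $t\mapsto|\gamma(t)-p|^2$ is smooth on $\R/\tau\Z$ regardless, and the bound $|\gamma(t_0)-p|\leq\mathrm{diam}(\overline\Omega)$ needs only $\gamma(t_0),p\in\overline\Omega$. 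You do not need to argue that the image lies in the open set $\Omega$.
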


The proofs of Theorem \ref{thm:main} and Corollary \ref{cor:main} are carried out at the end of Section~\ref{sec:proof_main}. Inequality \eqref{eqn:period_inequality} confirms the physical intuition that there exist orbits whose period decreases as the energy increases. Moreover, asymptotically the minimal period decreases at least as fast as the inverse of the square root of the energy.

\begin{Rmk}
In their influential work \cite{Benci_Giannoni_Periodic_bounce_trajectories_with_a_low_number_of_bounce_points} Benci-Giannoni prove existence of periodic bounce orbits of prescribed period and with at most $\dim\Omega+1$ bounce points. This is achieved by studying the classical fixed-time action functional of an approximating smooth Lagrangian system. In this article we replace this by the free-time action functional. Therefore, we detect periodic orbits of prescribed energy rather than period. 

A new difficulty in the approximation scheme is to obtain bounds on the periods for approximate solutions independent of the approximation parameter. This is necessary to pass to the limit. To achieve this we employ techniques from symplectic geometry as opposed to the variational techniques used by Benci-Giannoni. This also enables us to give explicit bounds on the period of the periodic bounce orbits in the limit.

We point out that in the case of a constant potential $V$, say $V\equiv 0$, the result by Benci-Giannoni  and the statement of Theorem \ref{thm:main} reduce to the mere existence of only one periodic billiard trajectory. In fact, if $V\equiv0$, given any $T$-periodic billiard trajectory  $\gamma$ of energy $E$, the reparametrized curve $\gamma(\frac{\cdot}{\tau})$ is a $\tau T$-periodic billiard trajectory of energy $\tau^{-2}E$.
\end{Rmk}

\begin{Rmk}
Finally, we want to mention two natural generalizations of the set-up considered here. Both seem nontrivial to us, and we will treat them  further in future research.

The first generalization is to allow general Riemannian metrics. The approximation scheme can be formulated entirely in Riemannian terms and we are optimistic that is carries over. The same applies to the symplectic topology part. Nevertheless, it is harder to ensure the existence of bouncing points for a sequence of approximating solutions. Indeed, if the Riemannian metric allows a closed geodesic in $\Omega$ and the potential $V$ vanishes along such a geodesic then for any energy this closed geodesic (suitably reparametrized) gives a periodic orbit with no bounce points. 

Another possible generalization is to add a magnetic field, i.e.\ ``twisting'' the symplectic structure on $T^*\overline{\Omega}$ by adding to the canonical symplectic form a closed 2-form $\sigma$ defined on the base $\overline{\Omega}$. If $\sigma$ is non-exact then it seems impossible to generalize the methods employed here. First or all, there is no Lagrangian formulation of the problem, in particular, there is no approximation scheme. Second, the approximating energy hypersurfaces in the Hamiltonian formulation may cease to be of contact type, in particular, might be without periodic orbits. Moreover, there is no period-action inequality. These two problems disappear if the magnetic field $\sigma$ is exact: indeed, in this case case, twisting the symplectic form on $T^*\overline{\Omega}$ amounts to adding a primitive of $\sigma$ to the Hamiltonian while  keeping the canonical symplectic structure. Then,  there is a Lagrangian formulation and the energy hypersurfaces are of contact type for sufficiently large energy. Nevertheless, the statement of the approximation scheme doesn't readily generalize since near the boundary the magnetic field interacts with the perturbation potential. Also, from a physical point of view one might expect to see ``creeping'' orbits, that is, orbits which after bouncing are very soon forced back towards the boundary by the magnetic field. Thus, effective bounds on the number of bounce points might be hard to obtain. 
\end{Rmk}

\subsection*{Organization of the article}
In Section \ref{sec:approximation_scheme} we define the approximation scheme for the free-time action functional and prove that a sequence of approximating  solutions converge to periodic bounce orbits of prescribed energy provided their Morse index is uniformly bounded. In Section \ref{sec:proof_main} we study the Hamiltonian formulation of the approximation scheme and prove existence of solutions using techniques from symplectic geometry. Moreover, we derive effective bounds on the period and Morse index. Combining this with the results from Section~\ref{sec:approximation_scheme} leads to a proof of Theorem~\ref{thm:main}.

\subsection*{Acknowledgments}
This article was written during visits of the authors at the Institute for Advanced Study, Princeton. The authors thank the Institute for Advanced Study for its stimulating working atmosphere. The second author has been supported by a postdoctoral fellowship granted by the Max Planck Institute for Mathematics in the Sciences (Leipzig, Germany). Both authors thank Felix Schlenk for helpful remarks.

This material is based upon work supported by the National Science Foundation under agreement No.~DMS-0635607 and DMS-0903856. Any opinions, findings and conclusions or recommendations expressed in this material are those of the authors and do not necessarily reflect the views of the National Science Foundation.

\section{The approximation scheme}\label{sec:approximation_scheme}

Our proof of Theorem \ref{thm:main} makes it necessary to modify the beautiful approximation scheme due to Benci-Gianonni \cite{Benci_Giannoni_Periodic_bounce_trajectories_with_a_low_number_of_bounce_points} by replacing the fixed-time action functional by the free-time action functional.

We recall that $\Omega\subset\R^N$ is an open, bounded domain with smooth boundary and $V\in C^\infty(\overline{\Omega})$. We fix $d_0\in(0,\tfrac12)$ sufficiently small, in particular, such that the distance function $\mathrm{dist}_{\p\Omega}(q)=\min\bigl\{ |q-q'|\ \bigl|\ q'\in\partial\Omega \bigr\}$  is smooth at all points $q\in\Omega$ with $\mathrm{dist}_{\p\Omega}(q)\leq 2d_0$. Let $k:[0,\infty)\to[0,2d_0]$ be a smooth function such that $0\leq k'\leq1$, $k(x)=x$ if $x\leq d_0$ and $k(x)=\mathrm{const}$ if $x\geq2d_0$. Then, we define a function $h\in C^\infty(\overline{\Omega})$ by 
\beq
h(q):=k(\mathrm{dist}_{\p\Omega}(q)).
\eeq
Notice that $h$ satisfying the following.
\begin{itemize}
\item $h(q)=\text{dist}_{\p\Omega}(q)$ for all $q\in \overline{\Omega} $ with $\text{dist}_{\p\Omega}(q)\leq d_0$,
\item $h(q)>d_0$ if $\text{dist}_{\p\Omega}(q)> d_0$,
\item $0\leq h\leq 1$ and $h(q)=\mathrm{const}$ if $\text{dist}_{\p\Omega}(q)\geq2d_0$,
\item $|\nabla h|\leq1$.
\end{itemize}
Finally we define a function $U\in C^\infty(\Omega)$ by
\beq\label{e:definition_of_U}
U(q):=\frac{1}{h^2(q)}\;.
\eeq
Thus, $U$ is a positive function that grows like $(\text{dist}_{\p\Omega})^{-2}$ near $\partial\Omega$ and is constant in the region $\{\mathrm{dist}_{\p\Omega}(q)\geq2d_0\}$, see Figure~\ref{fig:potential_U}.

\frag[n]{H}{$U$}
\frag[n]{w}{$\partial\Omega$}
\frag[n]{O}{$\Omega$}
\frag[n]{0}{$0$}
\frag[n]{z}{$h$}
\begin{figure}[h]
\includegraphics[scale=1]{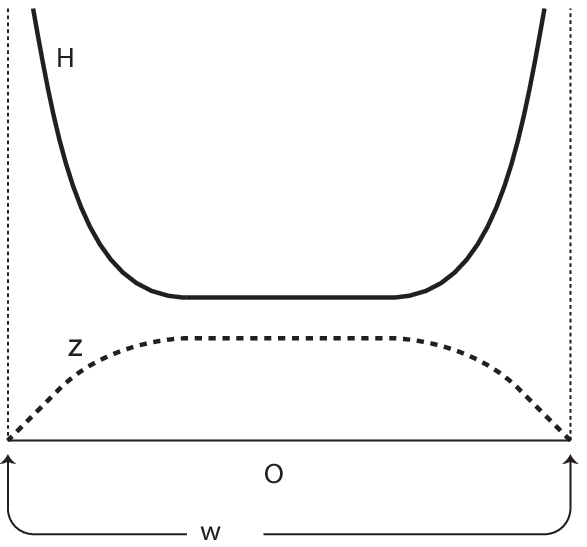}
\caption{}\label{fig:potential_U}
\end{figure}

For $\epsilon>0$, we introduce the modified Lagrangian 
\bea\label{e:Lagrangian_epsilon}
L_\epsilon:T\Omega=\Omega\times\R^N&\to\R\\
(q,v)&\mapsto\tfrac12|v|^2-V(q)-\epsilon U(q)\;.
\eea
For each energy value $E\in\R$ the free-time action functional $\L^E_\epsilon:H^1(\R/\Z,\overline{\Omega})\times\R_{>0}\to\R$ is given by
\bea
\L^E_\epsilon(\Gamma,\tau):=\tau\int_0^1 \Big[L_\epsilon \big(\Gamma(t),\tfrac{1}{\tau}\Gamma'(t) \big)+E\Big]dt\;
=
\int_0^\tau \Bigl[L_\epsilon(\gamma,\gamma'\bigl)+E \Bigr]dt,
\eea
where $\gamma(t):=\Gamma(\tfrac t\tau)$. The differential of $\L^E_\epsilon$ is given by
\bea\label{e:diff_action}
d\L^E_\epsilon(\Gamma,\tau)(\Psi,\sigma)
= & \,
\tau
\int_0^1
\Bigl[
\partial_v L_\epsilon\bigl(\Gamma,\tfrac1\tau \Gamma'\bigr) \tfrac1\tau\Psi'
+
\partial_q L_\epsilon\bigl(\Gamma,\tfrac1\tau \Gamma'\bigr) \Psi
\Bigr]
dt\\
&
+
\sigma
\int_0^1 
\Bigl[
E-\partial_v L_\epsilon\bigl(\Gamma,\tfrac1\tau \Gamma'\bigr) \tfrac1\tau\Gamma'
-
L_\epsilon\bigl(\Gamma,\tfrac1\tau \Gamma'\bigr)\Bigr]
dt\\
=&\,
\tau
\int_0^1
\Bigl[
\tau^{-2}
\langle
\Gamma',\Psi'
\rangle
-
\langle
\nabla V(\Gamma)+\epsilon\nabla U(\Gamma), \Psi
\rangle
\Bigr]
dt\\
&
+
\sigma
\int_0^1 
\Bigl[
E
-
(2\tau^2)^{-1} |\Gamma'|^2 + V(\Gamma) + \epsilon U(\Gamma)
\Bigr]
dt\\
=&\,
\int_0^\tau
\Bigl[
\langle
\gamma',\psi'
\rangle
-
\langle
\nabla V(\gamma)+\epsilon\nabla U(\gamma), \psi
\rangle
\Bigr]
dt\\
&
+
\frac \sigma\tau
\int_0^1 
\Bigl[
E
-
\tfrac12 |\gamma'|^2 + V(\gamma) + \epsilon U(\gamma)
\Bigr]
dt,
\eea
where $\psi(t):=\Psi(\tfrac t\tau)$. Therefore $(\Gamma,\tau)$ is a critical point of $\L^E_\epsilon$ 
if and only if the corresponding $\tau$-periodic  curve $\gamma$ is a solution of the Euler-Lagrange system
\beq\label{eqn:Euler_Lagrange}
\gamma''+\nabla V(\gamma)+\epsilon\nabla U(\gamma)=0
\eeq
with energy 
\beq\label{eqn:energy_of_gamma}
E_\epsilon(\gamma):=\tfrac12|\gamma'(t)|^2+V(\gamma(t))+\epsilon U(\gamma(t))=E\;.
\eeq
We prove the analogue of \cite[Proposition~2.3]{Benci_Giannoni_Periodic_bounce_trajectories_with_a_low_number_of_bounce_points} for the free-time action functional.
\begin{Prop}\label{prop:Benci_improved}
Let $K>0$ and $T_2>T_1>0$. For each $\epsilon>0$, let $(\Gamma_\epsilon,\tau_\epsilon)$ be a critical point of $\L^{E_\epsilon}_\epsilon$ with $T_1\leq \tau_\epsilon\leq T_2$ and $E_\epsilon\leq K$. Then, up to a subsequence, $(\Gamma_\epsilon,\tau_\epsilon)$  converges to $(\Gamma,\tau)$ in $H^1(S^1,\overline{\Omega})\times\R_{>0}$ as $\epsilon\to0$. Moreover, if we define the curve $\gamma(t):=\Gamma(\tfrac{t}{\tau})$, there exists a finite Borel measure $\mu$  on $\CCC=\{t\in\R/\tau\Z\,|\,\gamma(t)\in\partial\Omega\}$ such that
\begin{itemize}
\item[(i)] $\displaystyle\int_0^\tau  \Bigl[ \langle \gamma',\psi' \rangle - \langle \nabla V(\gamma),\psi \rangle \Bigr] dt = \int_{\CCC} \langle \nu(\gamma),\psi \rangle\, d\mu $ for all $\psi\in H^1(\R/\tau\Z;\R^N)$,
\item[(ii)] $\gamma$ is a smooth solution of the Euler-Lagrange system of $L$ outside $\supp(\mu)$, with energy $E(\gamma)=\lim_{\epsilon\to0} E_\epsilon$,
\item[(iii)] $\gamma$ has left and right derivatives that are left and right continuous on $\R/\tau\Z$ respectively. Moreover, $\gamma$ satisfies the law of  reflection~\eqref{e:law_of_reflection} at each time $t$ which is an isolated point of $\supp(\mu)$.
\end{itemize}
In particular, if $\supp(\mu)$ is a  finite set then $\gamma$ is a periodic bounce orbit of the Lagrangian system given by $L$ and $\B:=\supp(\mu)$ is its set of  bouncing times.
\end{Prop}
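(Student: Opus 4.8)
The plan is to run a concentration--compactness argument for the curves $\gamma_\epsilon$, in which the repulsive potential $\epsilon U$ produces, in the limit, a boundary measure. Write $\Gamma_\epsilon(t)=\gamma_\epsilon(\tau_\epsilon t)$ on $S^1$; the Euler--Lagrange equation \eqref{eqn:Euler_Lagrange} and energy conservation \eqref{eqn:energy_of_gamma} read $\gamma_\epsilon''+\nabla V(\gamma_\epsilon)+\epsilon\nabla U(\gamma_\epsilon)=0$ and $\tfrac12|\gamma_\epsilon'|^2+V(\gamma_\epsilon)+\epsilon U(\gamma_\epsilon)\equiv E_\epsilon\le K$. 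Since $U>0$ and $V$ is bounded on $\overline\Omega$, energy conservation yields the pointwise bound $|\gamma_\epsilon'|\le\sqrt{2(K-\min_{\overline\Omega}V)}=:\sqrt{2K'}$, so $\Gamma_\epsilon$ is bounded in $H^1(S^1;\R^N)$ and valued in the compact set $\overline\Omega$; moreover $0<\epsilon U(\gamma_\epsilon)\le K'$, i.e.\ $h(\gamma_\epsilon)\ge\sqrt{\epsilon/K'}$. Passing to a subsequence, $\tau_\epsilon\to\tau\in[T_1,T_2]$, $E_\epsilon\to E_\infty$, $\Gamma_\epsilon\rightharpoonup\Gamma$ in $H^1$ and $\Gamma_\epsilon\to\Gamma$ uniformly; set $\gamma(t)=\Gamma(t/\tau)$.

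The analytic core is a uniform bound on the repulsive force. Let $f_\epsilon:=h\circ\gamma_\epsilon$. On $\{f_\epsilon<d_0\}$ one has $h=\mathrm{dist}_{\p\Omega}$, $|\nabla h|\equiv1$, $\nabla h=-\nu\circ\pi$ with $\pi$ the nearest-point projection onto $\p\Omega$, and $\nabla U=-2h^{-3}\nabla h$; hence $f_\epsilon''=2\epsilon f_\epsilon^{-3}+R_\epsilon$ with $R_\epsilon:=-\langle\nabla V(\gamma_\epsilon),\nabla h(\gamma_\epsilon)\rangle+\langle\gamma_\epsilon',\hess h(\gamma_\epsilon)\gamma_\epsilon'\rangle$ bounded by a constant $C_1=C_1(V,\Omega,d_0,K)$. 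On a connected component $(a,b)$ of $\{f_\epsilon<d_0\}$ (with $f_\epsilon(a)=f_\epsilon(b)=d_0$; the case $\{f_\epsilon<d_0\}=S^1$ is handled directly by periodicity) integration gives $2\epsilon\int_a^bf_\epsilon^{-3}\,dt\le 2\sqrt{2K'}+C_1T_2$, and since $|f_\epsilon'|\le\sqrt{2K'}$ each such component has length $\ge d_0/\sqrt{2K'}$ for $\epsilon$ small, so there are at most $T_2\sqrt{2K'}/d_0$ of them. Summing, and using $|\nabla U|\le 2d_0^{-3}$ on $\{f_\epsilon\ge d_0\}$, we get $\epsilon\int_0^{\tau_\epsilon}|\nabla U(\gamma_\epsilon)|\,dt\le C_2$ uniformly in $\epsilon$. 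Splitting the same per-component bound at a level $\rho<d_0$ yields $\epsilon\int_0^{\tau_\epsilon}U(\gamma_\epsilon)\,dt\le \rho\,C_2+\rho^{-2}\epsilon\,T_2$, so $\epsilon\,U(\gamma_\epsilon)\to0$ in $L^1$; also $\epsilon U(\gamma_\epsilon(t))\to0$ for every $t$ with $\gamma(t)\notin\p\Omega$, and $\gamma_\epsilon\to\gamma$ in $C^2_{\mathrm{loc}}$ on $\{t:\gamma(t)\in\Omega\}$.

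Now pass to the limit. The $\R^N$-valued measures $\vec\mu_\epsilon:=\epsilon\nabla U(\Gamma_\epsilon)\,dt$ on $S^1$ have uniformly bounded total mass, hence (subsequence) $\vec\mu_\epsilon\rightharpoonup\vec\mu$ weakly-$*$; testing the critical point equation $\tau_\epsilon^{-2}\int\langle\Gamma_\epsilon',\Psi'\rangle=\int\langle\nabla V(\Gamma_\epsilon)+\epsilon\nabla U(\Gamma_\epsilon),\Psi\rangle$ against $\Psi\in H^1(S^1;\R^N)\subset C^0$ and rescaling to period $\tau$ gives, with $\vec\nu:=\tau\cdot(t\mapsto\tau t)_*\vec\mu$,
\[
\int_0^\tau\big[\langle\gamma',\psi'\rangle-\langle\nabla V(\gamma),\psi\rangle\big]\,dt=\int_{\R/\tau\Z}\langle\psi,d\vec\nu\rangle\qquad\forall\,\psi\in H^1(\R/\tau\Z;\R^N).
\]
To identify $\vec\nu$: where $h\circ\Gamma>0$ the densities $\epsilon\nabla U(\Gamma_\epsilon)$ tend to $0$ uniformly, so $\supp\vec\mu\subset\{\Gamma\in\p\Omega\}$, i.e.\ $\supp\vec\nu\subset\CCC$; and, up to an $O(\epsilon)$-mass error coming from $\{h\ge d_0\}$, $\vec\mu_\epsilon=\nu(\pi(\Gamma_\epsilon))\,\mu^{\mathrm s}_\epsilon$ with $\mu^{\mathrm s}_\epsilon:=2\epsilon\,h(\Gamma_\epsilon)^{-3}\,\mathbf 1_{\{h(\Gamma_\epsilon)<d_0\}}\,dt\ge0$; passing to a weak-$*$ limit $\mu^{\mathrm s}_\epsilon\rightharpoonup\mu^{\mathrm s}\ge0$ (supported in $\{\Gamma\in\p\Omega\}$) and using $\nu(\pi(\Gamma_\epsilon))\to\nu(\Gamma)$ uniformly near $\supp\mu^{\mathrm s}$ gives $d\vec\mu=\nu(\Gamma)\,d\mu^{\mathrm s}$, hence $d\vec\nu=\nu(\gamma)\,d\mu$ with $\mu:=\tau\cdot(t\mapsto\tau t)_*\mu^{\mathrm s}\ge0$ finite and supported in $\CCC$. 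This is (i). Strong $H^1$-convergence $\Gamma_\epsilon\to\Gamma$ follows from the critical point equation with $\Psi=\Gamma_\epsilon-\Gamma$: the right-hand side is $o(1)$ because $\|\Gamma_\epsilon-\Gamma\|_\infty\to0$ while $\|\nabla V(\Gamma_\epsilon)\|_\infty$ and $\|\epsilon\nabla U(\Gamma_\epsilon)\|_{L^1}$ are bounded, hence $\|\Gamma_\epsilon'\|_{L^2}\to\|\Gamma'\|_{L^2}$ and thus $(\Gamma_\epsilon,\tau_\epsilon)\to(\Gamma,\tau)$ in $H^1(S^1,\overline\Omega)\times\R_{>0}$.

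For (ii): on $O:=\R/\tau\Z\setminus\supp(\mu)$ the measure term in (i) vanishes, so $\gamma''=-\nabla V(\gamma)$ holds weakly there, and a standard bootstrap ($\gamma\in C^0\Rightarrow\gamma''\in C^0\Rightarrow\gamma\in C^2\Rightarrow\cdots$) gives $\gamma\in C^\infty(O)$ solving the Euler--Lagrange equation of $L$. For the energy: along the a.e.-convergent subsequence, $\tfrac12|\gamma'|^2+V(\gamma)=\lim(E_\epsilon-\epsilon U(\gamma_\epsilon))\le E_\infty$ a.e., while strong $H^1$-convergence together with $\epsilon\int U(\gamma_\epsilon)\to0$ gives $\int_0^\tau[\tfrac12|\gamma'|^2+V(\gamma)]\,dt=\tau E_\infty$; hence $\tfrac12|\gamma'|^2+V(\gamma)=E_\infty$ a.e., and by continuity everywhere on $O$, so $E(\gamma)=E_\infty=\lim E_\epsilon$. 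For (iii): since $(\gamma')'=-\nabla V(\gamma)\,dt-\nu(\gamma)\,\mu$ is a finite measure, $\gamma'\in BV(\R/\tau\Z)$, so it has one-sided limits everywhere with $t\mapsto\gamma'(t^-)$ left-continuous and $t\mapsto\gamma'(t^+)$ right-continuous. At a point $t_0$ isolated in $\supp(\mu)$ one has $\mu\!\restriction\!(t_0-\delta,t_0+\delta)=\mu(\{t_0\})\,\delta_{t_0}$ with $\mu(\{t_0\})>0$, so the jump is $\gamma'(t_0^+)-\gamma'(t_0^-)=-\mu(\{t_0\})\,\nu(\gamma(t_0))$; and since the punctured neighbourhood lies in $O$, the energy identity forces $|\gamma'(t_0^+)|=|\gamma'(t_0^-)|$. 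Decomposing $\gamma'(t_0^\pm)$ into tangential and normal parts, the jump relation gives equal tangential components and the energy relation gives $|\langle\gamma'(t_0^+),\nu\rangle|=|\langle\gamma'(t_0^-),\nu\rangle|$; equality of the normal components would force $\mu(\{t_0\})=0$, so they are opposite and nonzero, which is exactly \eqref{e:law_of_reflection}. Finally, if $\supp(\mu)$ is finite every point of it is isolated, so $\gamma$ is a periodic bounce orbit with $\B=\supp(\mu)$. The main obstacle is the uniform control of the boundary layer, namely the $L^1$-bound on $\epsilon\nabla U(\gamma_\epsilon)$ (so that the limiting measure exists with no loss of mass) and the $L^1$-vanishing of $\epsilon U(\gamma_\epsilon)$ (so that the limiting energy is exactly $E_\infty$, which is precisely what makes the reflection law come out of the jump relation); the remaining steps are soft compactness, lower semicontinuity, and the $BV$-structure of $\gamma'$ read off from (i).
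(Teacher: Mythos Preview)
Your overall strategy coincides with the paper's: obtain a uniform $L^1$ bound on $\epsilon\nabla U(\gamma_\epsilon)$, extract a limiting nonnegative measure supported on $\{\gamma\in\partial\Omega\}$, pass to the limit in the weak Euler--Lagrange equation, and read off smoothness, energy, and the reflection law from the resulting $BV$ structure of $\gamma'$. The identification $d\vec\nu=\nu(\gamma)\,d\mu$, the strong $H^1$ convergence via the test function $\Psi=\Gamma_\epsilon-\Gamma$ (the paper instead uses the compact embedding $W^{2,1}\hookrightarrow H^1$), the $L^1$ vanishing of $\epsilon\,U(\gamma_\epsilon)$, and the jump computation at isolated points of $\supp\mu$ are all correct and parallel the paper.

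There is, however, a genuine gap in your derivation of the key $L^1$ bound. The per--component estimate
\[
2\epsilon\int_a^b f_\epsilon^{-3}\,dt
=f_\epsilon'(b)-f_\epsilon'(a)-\int_a^b R_\epsilon\,dt
\le 2\sqrt{2K'}+C_1(b-a)
\]
is fine, but the claim that every connected component of $\{f_\epsilon<d_0\}$ has length at least $d_0/\sqrt{2K'}$ is false: nothing prevents $f_\epsilon$ from dipping only slightly below the level $d_0$ and returning, so there can be arbitrarily many arbitrarily short components, and summing the boundary contributions $f_\epsilon'(b_i)-f_\epsilon'(a_i)$ is not controlled by your argument. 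The remedy is simply to drop the splitting: the identity
\[
f_\epsilon''=2\epsilon\,h^{-3}(\gamma_\epsilon)\,|\nabla h(\gamma_\epsilon)|^2
+\langle\gamma_\epsilon',\hess h(\gamma_\epsilon)\gamma_\epsilon'\rangle
-\langle\nabla V(\gamma_\epsilon),\nabla h(\gamma_\epsilon)\rangle
\]
holds on all of $\R/\tau_\epsilon\Z$, and integrating over the full period (so that $\int f_\epsilon''=0$ by periodicity) gives directly $\int_0^{\tau_\epsilon}2\epsilon\,h^{-3}|\nabla h|^2\,dt\le C_1T_2$ with no boundary terms to count. This is exactly what the paper does, phrased as ``test the weak Euler--Lagrange equation with $\Psi_\epsilon=-\nabla h(\Gamma_\epsilon)$''; one then adds the harmless contribution from $\{h\ge d_0\}$ as you do to obtain the full $L^1$ bound on $\epsilon\,h^{-3}(\gamma_\epsilon)$ and hence on $\epsilon\nabla U(\gamma_\epsilon)$. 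With this correction your proof goes through.
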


\begin{proof}
Since the sequences $\{\tau_\epsilon\}$ and $\{E_\epsilon\}$ are bounded, up to a subsequence for $\epsilon\to0$, we have $\tau_\epsilon\to\tau$ and $E_\epsilon\to E$ with $T_1\leq\tau\leq T_2$ and $E\leq K$. We show that up to further passing to a subsequence, $\Gamma_\epsilon$ also converges in $H^1$.

Let $\gamma_\epsilon(t)=\Gamma_\epsilon(\tfrac{t}{\tau_\epsilon})$ be the periodic orbit corresponding to $(\Gamma_\epsilon,\tau_\epsilon)$. By equations \eqref{e:diff_action} and $\eqref{eqn:energy_of_gamma}$ we know that the energy $E_\epsilon(\gamma_\epsilon)$ is equal to $E_\epsilon$, and therefore
\beq
\label{e:energy_epsilon}
(2\tau_\epsilon)^{-1} |\Gamma_\epsilon'|^{2}
+
V(\Gamma_\epsilon)
+
\epsilon
U(\Gamma_\epsilon)
\equiv
E_\epsilon
\;.
\eeq
Moreover, $\gamma_\epsilon$ is a solution of the Euler-Lagrange equation \eqref{eqn:Euler_Lagrange} associated to $L_\epsilon$, which can be written in terms of $(\Gamma_\epsilon,\tau_\epsilon)$ as
\beq
\label{e:EulerLagrange_epsilon}
\tau_\epsilon^{-2} 
\Gamma_\epsilon''
+
\nabla V(\Gamma_\epsilon)
+
\epsilon\nabla U(\Gamma_\epsilon)
=0.
\eeq
In particular, for each $(\Psi,\sigma)\in H^1(S^1;\R^N)\times\R$ we have $d\L^{E_\epsilon}_\epsilon(\Gamma_\epsilon,\tau_\epsilon)(\Psi,\sigma)=0$ and choosing $\sigma=0$ in equation~\eqref{e:diff_action} we get
\bea\label{e:weakEulerLagrange_epsilon}
\int_0^1
\Bigl[
\tau_\epsilon^{-2}
\langle
\Gamma_\epsilon',\Psi'
\rangle\,
-
\langle
\nabla V(\Gamma_\epsilon),\Psi
\rangle
\Bigr]
dt
=
\int_0^1
\langle
\epsilon\nabla U(\Gamma_\epsilon)
,\Psi
\rangle\,
dt,\quad
\forall \Psi\in H^1(S^1;\R^N).
\eea
We fix $\Psi=\Psi_\epsilon=-\nabla h(\Gamma_\epsilon)$. By equation~\eqref{e:energy_epsilon}, $\Gamma_\epsilon'$ is uniformly bounded in $L^\infty$, and so is $\Psi_\epsilon'$. Hence, with our choice of $\Psi$ the first two summands on the left hand side of~\eqref{e:weakEulerLagrange_epsilon} are uniformly bounded in $\epsilon$, and thus, so must be the third summand, i.e.
\beq\label{e:bg_bound1}
\int_0^1
\langle
\epsilon\nabla U(\Gamma_\epsilon)
,\Psi_\epsilon
\rangle\;
dt
=
\int_0^1
\frac{2\epsilon}{ h^3(\Gamma_\epsilon)}|
\nabla h(\Gamma_\epsilon)|^2
\;
dt
\leq C.
\eeq
Let $\Omega'\subset\Omega$ be the compact neighborhood of $\partial\Omega$ given by
\beq
\Omega'=\{ q\in\Omega\,|\,h(q)\leq d_0 \},
\eeq
where $d_0$ is the positive constant that enters the definition of the function $h$. Notice that on $\Omega'$ we have $h=\mathrm{dist}_{\partial\Omega}$ and in particular $|\nabla h|=1$. Moreover, on $\Omega\setminus\Omega'$ we have $h>d_0$ and $|\nabla h|\leq1$. These properties, together with the estimate~\eqref{e:bg_bound1}, give  the uniform bound
\beq\label{e:technical_L1_bound}
\int_0^1
\frac{2\epsilon}{h^3(\Gamma_\epsilon)}\;dt
\leq
\int_0^1
\frac{2\epsilon}{h^3(\Gamma_\epsilon)}|\nabla h(\Gamma_\epsilon)|^2\;dt
+
\frac{2\epsilon}{d_0^3}
\leq
C
+
\frac{2\epsilon}{d_0^3}\;.
\eeq
This proves that $\epsilon\nabla U(\Gamma_\epsilon)$ is uniformly bounded in $L^1$ because
\beq
\epsilon\nabla U(\Gamma_\epsilon)=-\frac{2\epsilon}{h^3(\Gamma_\epsilon)}\nabla h(\Gamma_\epsilon)
\eeq
and $|\nabla h|\leq1$. Since $\nabla V(\Gamma_\epsilon)$ is also uniformly bounded in $L^1$ (actually in $L^\infty$), the Euler-Lagrange equation~\eqref{e:EulerLagrange_epsilon} together with $T_1\leq\tau_\epsilon\leq T_2$ forces $\Gamma_\epsilon''$ to be uniformly bounded in $L^1$ as well. Thus, $\Gamma_\epsilon$ is uniformly bounded in $W^{2,1}$. By the compactness of the embedding $W^{2,1}(S^1;\R^N)\hookrightarrow H^1(S^1;\R^N)$, up to passing to a subsequence for $\epsilon\to0$, we have that $\Gamma_\epsilon$ converges to some $\Gamma:S^1\to\overline\Omega$ in $H^1$.

Now, since the functions $\tilde\mu_\epsilon:=2\epsilon h^{-3}(\Gamma_\epsilon)$ are uniformly bounded in $L^1$, up to passing to a subsequence for $\epsilon\to0$, $\tilde\mu_\epsilon $ converges to some $\tilde\mu $ in $L^1$ weak-$\ast$. By the Riesz representation Theorem, $\tilde\mu $ is a positive, finite Borel measure. We set
\beq\label{e:CCC_Benci}
\CCC':=\{t\in S^1\,|\,\Gamma(t)\in\partial\Omega\}\;.
\eeq 
Since, for each $t\not\in \CCC'$, the function $\tilde\mu_\epsilon $ converges uniformly to $0$ in a neighborhood of $t$ the support of $\tilde\mu $ is contained in $\CCC'$. Moreover, if $t\in\CCC'$, for $\epsilon\to0$ the sequence $\nabla h(\Gamma_\epsilon(t))$ converges to $-\nu(\Gamma(t))$. Thus, taking the limit $\epsilon\to0$ in~\eqref{e:weakEulerLagrange_epsilon} we obtain
\beq
\tau^{-2}
\int_0^1
\langle
\Gamma',\Psi'
\rangle
\,dt
-
\int_0^1
\langle
\nabla V(\Gamma),\Psi
\rangle
\,dt
=
\int_{\CCC'}
\langle
\nu(\Gamma)
,\Psi
\rangle
\, d\tilde\mu 
,
\qquad
\forall \Psi\in H^1(S^1;\R^N).
\eeq
By the reparametrization $\R/\tau\Z\to S^1$ given by $t\mapsto \frac t\tau$ the measure $\tilde\mu $ is pulled-back to a measure $\mu$ on $\CCC:=\{t\in\R/\tau\Z\,|\,\gamma(t)\in\partial\Omega\}$ and the above equation can be rewritten as in point~(i) of the statement. 

Now, if $t\not\in\supp(\mu)$, we can take $\epsilon>0$ sufficiently small such that $[t-\epsilon,t+\epsilon]\cap\supp(\mu)=\emptyset$. For each $\psi\in H^1(\R/\tau\Z;\R^N)$ supported in $[t-\epsilon,t+\epsilon]$, point~(i) reduces to
\beq
\int_{t-\epsilon}^{t+\epsilon}
\Bigl[ \langle \gamma',\psi' \rangle - \langle \nabla V(\gamma),\psi \rangle \Bigr]
dt
=0,
\eeq
and a usual bootstrap argument readily implies that $\gamma$ is a smooth solution of the Euler-Lagrange equation of $L$ on $[t-\epsilon,t+\epsilon]$. This establishes point~(ii).

Now, point~(i) also implies that  $\gamma'$ is a curve of bounded variation. Therefore $\gamma$ has left and right derivatives at each point and they are left and right continuous respectively. In order to conclude the proof, we only need to establish that the reflection rule is satisfied at each time $t\in\supp(\mu)$.

Up to passing to a subsequence for $\epsilon\to 0$, the sequence $\epsilon U(\gamma_\epsilon)$ converges to $0$ almost everywhere. Indeed, assume that $\epsilon U(\gamma_\epsilon)$  does not converge to zero on a set $I\subset\R/\tau\Z$. Then, $h(\gamma_\epsilon)\to0$ and $|\nabla h(\gamma_\epsilon)|\to1$ pointwise on $I$. Since
\beq
\epsilon \nabla U(\gamma_\epsilon)
=
\epsilon U(\gamma_\epsilon)
\frac{-2 \nabla h(\gamma_\epsilon(t))}{h(\gamma_\epsilon(t))}\;,
\eeq
then $|\epsilon \nabla U(\gamma_\epsilon)|\to+\infty$  pointwise on $I$. Now, assume that $I$ has positive Lebesgue measure. By Fatou's Lemma we get
\beq
\liminf_{\epsilon\to0}
\int_I
|\epsilon \nabla U(\gamma_\epsilon)|\,dt
\geq
\int_I
\liminf_{\epsilon\to0}
|\epsilon \nabla U(\gamma_\epsilon)|\,dt
=
+\infty,
\eeq
which contradicts the fact that $\epsilon \nabla U(\gamma_\epsilon)$ is uniformly bounded in $L^1$.

Since $\epsilon U(\gamma_\epsilon)$ converges to $0$ almost everywhere and $E_\epsilon\to E$, we have that $\frac12|\gamma'|+V(\gamma)=E$ almost everywhere, and since $\gamma'$ has bounded variation we actually obtain
\beq
\label{e:conservation_energy_lxrx}
\tfrac12|\gamma'(t^\pm)|+V(\gamma(t))=E\qquad\forall t\in\R/\tau\Z.
\eeq

Now, let us consider a time $t$ which is an isolated point in $\supp(\mu)$. In point (i) of the statement,  let us choose $\psi$ to be supported in the interval $[t-\epsilon,t+\epsilon]$, where $\epsilon>0$ is sufficiently small so that $[t-\epsilon,t+\epsilon]\cap\supp(\mu)=\{t\}$. After an integration by parts we obtain
\beq
\langle 
\gamma'(t^-)-\gamma'(t^+),\psi(t)
\rangle 
-
\int_{[t-\epsilon,t+\epsilon]\setminus\{t\}}   \langle \gamma'' + \nabla V(\gamma),\psi\rangle\, dt =  \langle \nu(\gamma(t)),\psi(t)\rangle\, \mu(\{t\})\,.
\eeq
Since $\gamma$ is a solution of the Euler-Lagrange equation of $L$ on $[t-\epsilon,t+\epsilon]\setminus\{t\}$, the integral on the left-hand side is zero and we actually have
\beq\label{e:bouncing_measure}
\langle 
\gamma'(t^-)-\gamma'(t^+),v
\rangle 
= \langle \nu(\gamma(t)),v\rangle\, \mu(\{t\}),\qquad\forall v\in\R^N\,.
\eeq
Choosing $v$ to be an arbitrary vector tangent to $\partial\Omega$ at $\gamma(t)$, namely $\langle\nu(\gamma(t)),v\rangle=0$, we obtain that the components of $\gamma'(t^-)$ and $\gamma'(t^+)$ tangent to $\partial\Omega$ are the same, i.e.
\beq
\gamma'(t^+)-\langle \nu(\gamma(t)),\gamma'(t^+) \rangle\cdot\nu(\gamma(t))
=
\gamma'(t^-)-\langle \nu(\gamma(t)),\gamma'(t^-) \rangle\cdot\nu(\gamma(t))\;.
\eeq
This, together with conservation of energy \eqref{e:conservation_energy_lxrx}, implies that 
\beq
|\langle \nu(\gamma(t)),\gamma'(t^+) \rangle|=|\langle \nu(\gamma(t)),\gamma'(t^-) \rangle|,
\eeq 
and if this latter quantity is nonzero then we must have
\beq
\langle \nu(\gamma(t)),\gamma'(t^+) \rangle=-\langle \nu(\gamma(t)),\gamma'(t^-) \rangle.
\eeq
Finally, by choosing $v=\nu(\gamma(t))$ in  equation~\eqref{e:bouncing_measure}  we obtain
\beq
 \langle \nu(\gamma(t)),\gamma'(t^+) \rangle
=
\tfrac12  \langle
\gamma'(t^-)-\gamma'(t^+),\nu(\gamma(t))
\rangle 
= 
\tfrac12
\mu(\{t\})\neq 0.
\eeq
This concludes the proof of point~(iii).
\end{proof}

\begin{Prop}\label{prop:bounde_Morse_index}
We consider the situation of Proposition~\ref{prop:Benci_improved}. Then, up to taking a subsequence of $\{(\Gamma_\epsilon,\tau_\epsilon)\}$,  the cardinality $|\supp(\mu)|$ of the support of the measure $\mu$ is bounded from above by the Morse index of the restricted functional $\L^{E_\epsilon}_\epsilon|_{H^1\times\{\tau_\epsilon\}}$ at $\Gamma_\epsilon$ for all $\epsilon$ sufficiently small, i.e.
\beq
|\supp(\mu)|
\leq
\liminf_{\epsilon\to0}
\Morse\big(\Gamma_\epsilon; \L^{E_\epsilon}_\epsilon|_{H^1\times\{\tau_\epsilon\}}\big)\;.
\eeq
\end{Prop}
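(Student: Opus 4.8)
The plan is to show that a negative-definite subspace for the Hessian of $\L^{E_\epsilon}_\epsilon|_{H^1\times\{\tau_\epsilon\}}$ at $\Gamma_\epsilon$ can be constructed from bump functions concentrated near the bounce times, so that its dimension forces $\Morse(\Gamma_\epsilon)$ to be at least $|\supp(\mu)|$ (minus an $o(1)$ term, which is why one passes to a subsequence). Concretely, fix finitely many distinct points $t_1,\dots,t_m\in\supp(\mu)$; it suffices to produce, for each small $\epsilon$, an $m$-dimensional subspace of $H^1(S^1;\R^N)$ on which the second variation $d^2\bigl(\L^{E_\epsilon}_\epsilon|_{H^1\times\{\tau_\epsilon\}}\bigr)(\Gamma_\epsilon)$ is negative definite. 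Recall from \eqref{e:diff_action} that on the slice $\tau=\tau_\epsilon$ the action is $\Gamma\mapsto \tau_\epsilon\int_0^1\bigl[\tfrac1{2\tau_\epsilon^2}|\Gamma'|^2-V(\Gamma)-\epsilon U(\Gamma)+E_\epsilon\bigr]\,dt$, so its Hessian at $\Gamma_\epsilon$ in a direction $\Psi$ is
\[
Q_\epsilon(\Psi)=\int_0^1\Bigl[\tau_\epsilon^{-1}|\Psi'|^2-\tau_\epsilon\,\hess V(\Gamma_\epsilon)[\Psi,\Psi]-\tau_\epsilon\,\epsilon\,\hess U(\Gamma_\epsilon)[\Psi,\Psi]\Bigr]\,dt .
\]
The first two terms are controlled uniformly in $\epsilon$ (using the $L^\infty$-bound on $\Gamma_\epsilon'$ and that $V$ is fixed), so the whole point is that the $-\epsilon\,\hess U$ term is \emph{very negative} in directions pointing outward near the boundary, since $U=h^{-2}$ blows up there.

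Next I would make the test directions precise. Choose disjoint intervals $I_j=[t_j-\delta,t_j+\delta]\subset S^1$ and a fixed smooth cutoff $\chi_j$ supported in $I_j$ with $\chi_j(t_j)=1$. Set $\Psi_j:=\chi_j\cdot\nabla h(\Gamma_\epsilon)$ (the outward-ish direction), or slightly more robustly $\Psi_j:=\chi_j\cdot\eta$ for a fixed vector $\eta$ transverse to $\partial\Omega$ near $\gamma(t_j)$; since the $\Psi_j$ have disjoint supports, $Q_\epsilon$ is block-diagonal on $\mathrm{span}\{\Psi_1,\dots,\Psi_m\}$ and it is enough to show each $Q_\epsilon(a_j\Psi_j)<0$ for $a_j\neq0$, i.e.\ $Q_\epsilon(\Psi_j)<0$. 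For this I would compute $\hess U = 2h^{-4}\bigl(3\,dh\otimes dh - h\,\hess h\bigr)$ and use that along $I_j$ we have $h(\Gamma_\epsilon(t))\to 0$ on a set of positive measure (this is exactly the mechanism isolated in the proof of Proposition~\ref{prop:Benci_improved}: if $\epsilon U(\gamma_\epsilon)$ stays bounded away from $0$ on a positive-measure set then $|\nabla h(\gamma_\epsilon)|\to1$ there), and that $t_j\in\supp(\mu)$ means the measures $\tilde\mu_\epsilon=2\epsilon h^{-3}(\Gamma_\epsilon)$ do \emph{not} concentrate to zero mass on $I_j$. Combining, $\epsilon\int_{I_j}h^{-4}(\Gamma_\epsilon)|\nabla h(\Gamma_\epsilon)|^2\,dt$ is bounded below by a positive constant times $\epsilon\int_{I_j}h^{-3}(\Gamma_\epsilon)\,dt$ divided by $\inf_{I_j}h(\Gamma_\epsilon)\to0$, hence $\to+\infty$; meanwhile the $h^{-3}\hess h$ correction and the $V$- and kinetic terms stay bounded. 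So $Q_\epsilon(\Psi_j)\to-\infty$, in particular $Q_\epsilon(\Psi_j)<0$ for all small $\epsilon$.

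Finally, I would assemble the pieces: the span of $\Psi_1,\dots,\Psi_m$ is $m$-dimensional in $H^1(S^1;\R^N)$ (the $\Psi_j$ are linearly independent since they have disjoint supports and each is nonzero — one should check $\Psi_j\not\equiv0$, which holds because $\nabla h(\Gamma_\epsilon(t_j))\to-\nu(\gamma(t_j))\neq0$, or trivially for the fixed-vector variant), and $Q_\epsilon$ is negative definite on it for all $\epsilon$ small, so $\Morse(\Gamma_\epsilon;\L^{E_\epsilon}_\epsilon|_{H^1\times\{\tau_\epsilon\}})\geq m$ for those $\epsilon$; taking $\liminf_{\epsilon\to0}$ and then letting $m\uparrow|\supp(\mu)|$ (along a subsequence realizing finitely many bounce points, passing to a diagonal subsequence if $\supp(\mu)$ is infinite — though once the bound is proven finiteness follows) gives the claim. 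The main obstacle is the middle step: quantitatively pinning down that $\epsilon\,\hess U(\Gamma_\epsilon)$ is large and negative on $\mathrm{span}\{\Psi_j\}$, i.e.\ turning the qualitative statement ``$t_j\in\supp(\mu)$'' into the lower bound $\epsilon\int_{I_j}h^{-3}(\Gamma_\epsilon)\,dt\gtrsim 1$ and controlling where on $I_j$ the blow-up happens so that it is not cancelled by the $h\,\hess h$ term; some care with the fact that $\mathrm{dist}_{\partial\Omega}$ (hence $h$) is only smooth up to distance $2d_0$ is also needed, but this is harmless since near a bounce point $\Gamma_\epsilon$ is eventually inside $\Omega'$.
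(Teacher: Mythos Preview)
Your overall strategy matches the paper's almost exactly: choose finitely many points $t_1,\dots,t_m\in\supp(\tilde\mu)$, build test directions $\Psi_j=\chi_j\,\nabla h(\Gamma_\epsilon)$ with disjoint supports, split the Hessian into a uniformly bounded part (kinetic term, $\hess V$ term, and the $\epsilon h^{-3}\hess h$ term, the last controlled by the $L^1$ bound~\eqref{e:technical_L1_bound}) and the dominant negative part $-6\tau_\epsilon\epsilon\int\chi_j^2 h^{-4}|\nabla h|^4$, and show the latter diverges to $+\infty$. That is precisely the decomposition $A_\epsilon-B_\epsilon$ in the paper's proof.

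The gap is in your ``key quantitative step''. The claimed bound
\[
\epsilon\int_{I_j}h^{-4}(\Gamma_\epsilon)\,|\nabla h(\Gamma_\epsilon)|^2\,dt\ \gtrsim\ \frac{\epsilon\int_{I_j}h^{-3}(\Gamma_\epsilon)\,dt}{\inf_{I_j}h(\Gamma_\epsilon)}
\]
is false: the pointwise inequality $h^{-4}(t)\ge h^{-3}(t)/\inf_{I_j}h$ would require $h(t)\le\inf_{I_j}h$, which holds only at the minimizer. The mass of $\epsilon h^{-3}(\Gamma_\epsilon)$ on $I_j$ need not be concentrated where $h(\Gamma_\epsilon)$ is near its infimum, and $\sup_{I_j}h(\Gamma_\epsilon)$ does \emph{not} tend to $0$ for a fixed interval $I_j$ (only $h(\Gamma_\epsilon(t_j))\to0$). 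So your heuristic ``divide by $\inf h\to0$'' does not produce a valid lower bound.

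The paper closes this gap with H\"older's inequality: on a small fixed interval $J=[t_j-\delta'',t_j+\delta'']$ one has
\[
\epsilon\int_J h^{-4}(\Gamma_\epsilon)\ \ge\ |J|^{-1/3}\,\epsilon\Bigl(\int_J h^{-3}(\Gamma_\epsilon)\Bigr)^{4/3}
=\ |J|^{-1/3}\underbrace{\Bigl(\epsilon\int_J h^{-3}(\Gamma_\epsilon)\Bigr)}_{=:B_\epsilon'}\cdot\underbrace{\Bigl(\int_J h^{-3}(\Gamma_\epsilon)\Bigr)^{1/3}}_{=:(B_\epsilon'')^{1/3}}.
\]
The factor $B_\epsilon'$ is bounded below by $\tfrac12\tilde\mu(\{t_j\})>0$ via the weak-$*$ convergence $2\epsilon h^{-3}(\Gamma_\epsilon)\rightharpoonup\tilde\mu$; and $B_\epsilon''=\epsilon^{-1}B_\epsilon'\to+\infty$ (the paper actually proves $B_\epsilon''\to\infty$ by a second H\"older step down to $\int_J h^{-2}$ and an explicit logarithmic divergence using the $C^{1/2}$ bound $|h(\Gamma_\epsilon(s))-h(\Gamma_\epsilon(t_j))|\le D|s-t_j|^{1/2}$, but the shorter observation already suffices). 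This is exactly the mechanism you were aiming for, and once you replace your $\inf h$ argument by this H\"older step your proof goes through.
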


\begin{proof}
With the notation adopted in the proof of Proposition~\ref{prop:Benci_improved} (see in particular the paragraph of equation~\eqref{e:CCC_Benci}), the measure $\mu$ is the pullback of a measure $\tilde\mu$ on $S^1=\R/\Z$ via the reparametrization $\iota:\R/\tau\Z\to S^1$ given by $\iota(t)=\tfrac t\tau$. In particular $\iota(\supp(\mu))=\supp(\tilde\mu)$ and
\beq
|\supp(\mu)|=|\supp(\tilde\mu)|.
\eeq
Hence, in order to prove the proposition it is enough to establish the following: for each point $t\in\supp(\tilde\mu)$ and for each $\epsilon>0$ sufficiently small, there exists a vector field $\Psi_\epsilon\in H^1(S^1;\R^N)$ supported on an sufficiently small neighborhood of $t$ and such that 
\beq\label{e:negative_hessian}
\hess \L^{E_\epsilon}_\epsilon(\Gamma_\epsilon,\tau_\epsilon) [ (\Psi_\epsilon,0),(\Psi_\epsilon,0) ]
<0.
\eeq
In fact, assume that this is verified. Then, for $k$ distinct points $t_1,...,t_k\in\supp(\tilde\mu)$ and sufficiently small $\epsilon>0$ we can find $k$ vector fields $\Psi_{\epsilon,1},...,\Psi_{\epsilon,k}$ such that each $\Psi_{\epsilon,j}$ is supported in a sufficiently small neighborhood of $t_j$ and verifies~\eqref{e:negative_hessian}. In particular, we may assume that the supports of the $\Psi_{\epsilon,j}$'s are pairwise disjoint. Therefore, these vector fields span a $k$-dimensional vector subspace of $H^1(S^1;\R^N)$ over which the Hessian of the restricted action functional\footnote{Notice that $\hess \L^{E_\epsilon}_\epsilon|_{H^1\times\{\tau_\epsilon\}}(\Gamma_\epsilon) [ \Psi,\Xi]=\hess \L^{E_\epsilon}_\epsilon(\Gamma_\epsilon,\tau_\epsilon) [ (\Psi,0),(\Xi,0) ]$.} $\L^{E_\epsilon}_\epsilon|_{H^1\times\{\tau_\epsilon\}}$ at $\Gamma_\epsilon$ is negative definite, which implies 
\beq
\Morse\big(\Gamma_\epsilon; \L^{E_\epsilon}_\epsilon|_{H^1\times\{\tau_\epsilon\}}\big)\geq k.
\eeq

Let us now establish the assertion made at the beginning of the proof. From now on we fix $t\in\supp(\tilde\mu)$ and $\epsilon>0$ sufficiently small. For $\delta>\delta'>0$ we choose a smooth function $\phi_\epsilon:\R/\Z\to[0,1]$ such that $\supp(\phi_\epsilon)\subseteq[t-\delta,t+\delta]$ and $\phi_\epsilon\equiv1$ on $[t-\delta',t+\delta']$. We define the vector field $\Psi_\epsilon\in H^1(S^1;\R^N)$ by
\beq
\Psi_\epsilon(s):=-\phi_\epsilon(s) \nabla h(\Gamma_\epsilon(s)).
\eeq
We will show that $\Psi_\epsilon$ satisfies~\eqref{e:negative_hessian}. The left-hand side of~\eqref{e:negative_hessian} computes to
\bea
&\hess \L^{E_\epsilon}_\epsilon(\Gamma_\epsilon,\tau_\epsilon)[(\Psi_\epsilon,0),(\Psi_\epsilon,0)]\\
&\qquad
=
\tau_\epsilon
\int_0^1
\Big[
\tau_\epsilon^{-2}
\langle
\partial_{vv}L_\epsilon(\Gamma_\epsilon,\tfrac1{\tau_\epsilon}\Gamma_\epsilon') \Psi'_\epsilon,\Psi'_\epsilon
\rangle
+
\langle
\partial_{qq}L_\epsilon(\Gamma_\epsilon,\tfrac1{\tau_\epsilon}\Gamma_\epsilon') \Psi_\epsilon,\Psi_\epsilon
\rangle
\Big]
ds
\\
&\qquad
= A_\epsilon - B_\epsilon,
\eea
where
\bea
A_\epsilon &=
\tau_\epsilon
\int_0^1
\left[
\tau_\epsilon^{-2}
|\Psi'_\epsilon|^2
-
\langle
\nabla^2V(\Gamma_\epsilon)
\Psi_\epsilon,\Psi_\epsilon
\rangle
+
2\epsilon\frac{\langle\nabla^2h(\Gamma_\epsilon)\Psi_\epsilon,\Psi_\epsilon\rangle}{h^3(\Gamma_\epsilon)}
\right]dt,
\\
B_\epsilon &=
6\tau_\epsilon
\epsilon
\int_0^1
\frac{\langle\nabla h(\Gamma_\epsilon),\Psi_\epsilon\rangle^2}{h^4(\Gamma_\epsilon)}\;
ds.
\eea
Now, the term $|A_\epsilon|$ is uniformly bounded  in $\epsilon$. Indeed, since $\Gamma_\epsilon$ converges in $H^1$, the vector field $\Psi_\epsilon$ is uniformly bounded in $H^1$, which implies that $\Psi_\epsilon'$ is uniformly bounded in $L^2$. Moreover, in the proof of Proposition~\ref{prop:Benci_improved} (see equation~\eqref{e:technical_L1_bound}) we showed that $2\epsilon h^{-3}(\Gamma_\epsilon)$ is uniformly bounded in $L^1$, and therefore the last summand under the integral in $A_\epsilon$ is also uniformly bounded in $L^1$.

As for $B_\epsilon$, we want to show that it goes to $+\infty$ as $\epsilon\to0$. Since $\Gamma_\epsilon\to\Gamma$ in $H^1$ (in particular in $C^0$) as $\epsilon\to0$ and $|\nabla h|=1$ on $\partial\Omega$, we can find $\delta''\in(0,\delta']$ such that $|\nabla h(\Gamma_\epsilon(s))|^4\geq \tfrac12$ for each $s\in[t-\delta'',t+\delta'']$ and $\epsilon>0$ sufficiently small. Therefore we can estimate
\bea
B_\epsilon
&
\geq
6 T_1 \epsilon
\int_{t-\delta''}^{t+\delta''}
\frac{\langle\nabla h(\Gamma_\epsilon),\Psi_\epsilon\rangle^2}{h^4(\Gamma_\epsilon)}\;
ds\\
&
=
6 T_1 \epsilon
\int_{t-\delta''}^{t+\delta''}
\frac{|\nabla h(\Gamma_\epsilon)|^4}{h^4(\Gamma_\epsilon)}\;
ds\\
&
\geq
\frac{6 T_1 \epsilon}{2}
\int_{t-\delta''}^{t+\delta''}
\frac{1}{h^4(\Gamma_\epsilon)}\;
ds\\
(\text{by H\"older ineq.})&
\geq
\frac{6 T_1 \epsilon}{2(2\delta'')^{1/3}}
\biggl(
\int_{t-\delta''}^{t+\delta''}
\frac{1}{h^3(\Gamma_\epsilon)}\;
ds
\biggr)^{4/3}\\
&
=\frac{6 T_1}{2(2\delta'')^{1/3}}
\biggl(\,\underbrace{
\int_{t-\delta''}^{t+\delta''}
\frac{\epsilon}{h^3(\Gamma_\epsilon)}\;
ds}_{\displaystyle\qquad =:B_\epsilon'}\,
\biggr)
\biggl(\,
\underbrace{\int_{t-\delta''}^{t+\delta''}
\frac{1}{h^3(\Gamma_\epsilon)}\;
ds}_{\displaystyle\qquad =:B_\epsilon''}
\,\biggr)^{1/3}
\;.
\eea
As we showed in the proof of Proposition~\ref{prop:Benci_improved} (see the paragraph of equation~\eqref{e:CCC_Benci}), up to a subsequence the function $2\epsilon h^{-3}(\Gamma_\epsilon)$ converges to the measure $\tilde\mu$ in $L^1$ weak-$\ast$, which implies that $B_\epsilon'$ converges to a constant $B'\geq \tfrac12\tilde\mu(\{t\})>0$. Hence, it remains to be shown that $B_\epsilon''\to+\infty$ as $\epsilon\to0$. By the H\"older inequality we get
\beq
B_\epsilon''
=
\int_{t-\delta''}^{t+\delta''}
\frac{1}{h^3(\Gamma_\epsilon)}\;
ds
\geq 
(2\delta'')^{-1/2}
\biggl(\,
\underbrace{\int_{t-\delta''}^{t+\delta''}
\frac 1{h^2(\Gamma_\epsilon)}\,
ds}_{\displaystyle\qquad =:B_\epsilon'''}
\,
\biggr)^{3/2}.
\eeq
We recall that up to a subsequence $\Gamma_\epsilon\to\Gamma$ as $\epsilon\to0$ in $H^1$, and that $\Gamma(t)\in\partial\Omega$. By the definition of $h$, if we choose $\delta''$ small enough we have that
\beq
h(\Gamma_\epsilon(s))=\mathrm{dist}_{\partial\Omega}(\Gamma_\epsilon(s)),\qquad \forall s\in[t-\delta'',t+\delta''].
\eeq
for all $\epsilon>0$ sufficiently small. Then, let $D>0$ be a uniform upper bound for the $L^2$ norm of the vector fields $\Gamma_\epsilon'$. For each $ s\in[t-\delta'',t+\delta'']$ we can estimate using $|\nabla h|\leq1$
\beq
|h(\Gamma_\epsilon(s))-h(\Gamma_\epsilon(t))|
\leq
|\Gamma_\epsilon(s)-\Gamma_\epsilon(t)|
\leq
|s-t|^{1/2} \|\Gamma_\epsilon'\|_{L^2}
\leq
|s-t|^{1/2} D.
\eeq
This implies
\bea
B_\epsilon'''
&
=
\int_{t-\delta''}^{t+\delta''}
\frac 1{h^2(\Gamma_\epsilon(s))}\,
ds
\geq
\int_{t-\delta''}^{t+\delta''}
\frac 1{(h(\Gamma_\epsilon(0))+|s-t|^{1/2} D)^2 }\,
ds\\
&=
\int_{-\delta''}^{\delta''}
\frac 1{(h(\Gamma_\epsilon(0))+|s|^{1/2} D)^2 }\,
ds
=
2\int_{0}^{\delta''}
\frac 1{(h(\Gamma_\epsilon(0))+s^{1/2} D)^2 }\,
ds\\
&\geq
\int_{0}^{\delta''}
\frac 1{h^2(\Gamma_\epsilon(0))+s D^2 }\,
ds
=\frac1{D^2} \ln\left(1+ \frac{D^2\delta''}{h(\Gamma_\epsilon(0))}\right).
\eea
Since up to a subsequence for $\epsilon\to0$ we have $h(\Gamma_\epsilon(0))\to0$, from the above estimate we infer that $B_\epsilon'''\to+\infty$. Thus, this shows that $B_\epsilon\to+\infty$ and therefore the proposition follows.
\end{proof}

Next, we examine the case in Proposition \ref{prop:Benci_improved} where the periods go to zero.

\begin{Prop}\label{prop:Benci_improved_without_lower_period_bound}
Let $K>0$ and $(\Gamma_\epsilon,\tau_\epsilon)$ be a critical point of $\L^{E_\epsilon}_\epsilon$ with  $E_\epsilon\leq K$ and $\tau_\epsilon\to0$ as $\epsilon\to0$. Then, up to a subsequence for $\epsilon\to0$, $\Gamma_\epsilon$ converges in $C^0$ to a constant curve $\gamma\equiv q\in\overline{\Omega}$. Moreover, one of the following holds.
\begin{itemize}
\item[(i)] $q$ is a critical point of the potential $V$.
\item[(ii)] $q$ lies in $\partial\Omega$ and there exists $a>0$ such that $\nabla V(q)=-a\nu(q)$, where $\nu$ is the outer normal to $\partial\Omega$.
\end{itemize}
\end{Prop}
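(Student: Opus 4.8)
The plan is to use conservation of energy to force the loops $\Gamma_\epsilon$ to collapse to a single point $q$, and then to integrate the Euler--Lagrange equation \eqref{e:EulerLagrange_epsilon} over one period to pin down $q$. For the collapse: from conservation of energy \eqref{e:energy_epsilon}, using $\epsilon U\geq 0$ and $V\geq\min_{\overline{\Omega}}V$, one has $|\Gamma_\epsilon'(t)|^2=2\tau_\epsilon^2\bigl(E_\epsilon-V(\Gamma_\epsilon(t))-\epsilon U(\Gamma_\epsilon(t))\bigr)\leq 2\tau_\epsilon^2(K-\min_{\overline{\Omega}}V)$, so $\|\Gamma_\epsilon'\|_{L^\infty}\to0$ and the oscillation of the loop $\Gamma_\epsilon$ tends to $0$; fixing $t_0\in S^1$ and passing to a subsequence with $\Gamma_\epsilon(t_0)\to q\in\overline{\Omega}$, we get $\Gamma_\epsilon\to q$ in $C^0$. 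I also record the uniform bound $\tau_\epsilon^{-2}|\Gamma_\epsilon'|^2\leq 2(K-\min_{\overline{\Omega}}V)$, which will play the role of the lower bound $\tau_\epsilon\geq T_1$ that is unavailable here.

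Next I would check that the $L^1$-estimate in the proof of Proposition~\ref{prop:Benci_improved} survives when $\tau_\epsilon\to0$. Testing \eqref{e:weakEulerLagrange_epsilon} against $\Psi_\epsilon=-\nabla h(\Gamma_\epsilon)$ exactly as there, the only term endangered by $\tau_\epsilon\to0$ is $\tau_\epsilon^{-2}\langle\Gamma_\epsilon',\Psi_\epsilon'\rangle$; but $|\Psi_\epsilon'|\leq\|\nabla^2 h\|_{L^\infty}|\Gamma_\epsilon'|$, and together with the bound from the previous step this gives $|\tau_\epsilon^{-2}\langle\Gamma_\epsilon',\Psi_\epsilon'\rangle|\leq 2\|\nabla^2 h\|_{L^\infty}(K-\min_{\overline{\Omega}}V)$. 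Since also $|\langle\nabla V(\Gamma_\epsilon),\nabla h(\Gamma_\epsilon)\rangle|\leq\max_{\overline{\Omega}}|\nabla V|$, we obtain $\int_0^1 2\epsilon h^{-3}(\Gamma_\epsilon)|\nabla h(\Gamma_\epsilon)|^2\,dt\leq C$, and splitting off the region $\{h>d_0\}$ exactly as in \eqref{e:technical_L1_bound} yields $\int_0^1 2\epsilon h^{-3}(\Gamma_\epsilon)\,dt\leq C+2\epsilon d_0^{-3}$. Hence $\epsilon\nabla U(\Gamma_\epsilon)=-2\epsilon h^{-3}(\Gamma_\epsilon)\nabla h(\Gamma_\epsilon)$ is uniformly bounded in $L^1$.

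Then I would integrate \eqref{e:EulerLagrange_epsilon} over $S^1$: periodicity of $\Gamma_\epsilon'$ kills the second-order term and leaves $\int_0^1\nabla V(\Gamma_\epsilon)\,dt+\epsilon\int_0^1\nabla U(\Gamma_\epsilon)\,dt=0$, where $\int_0^1\nabla V(\Gamma_\epsilon)\,dt\to\nabla V(q)$ by $C^0$-convergence. If $q\in\Omega$, then $h(\Gamma_\epsilon)$ stays bounded away from $0$ for small $\epsilon$, so $\epsilon\nabla U(\Gamma_\epsilon)\to0$ uniformly, whence $\nabla V(q)=0$, which is alternative~(i). If $q\in\partial\Omega$, then for small $\epsilon$ every $\Gamma_\epsilon(t)$ lies in the region where $h=\mathrm{dist}_{\partial\Omega}$, and by uniform convergence of $\Gamma_\epsilon$ together with continuity of $\nabla h$ up to $\partial\Omega$ (where it equals $-\nu$) one has $\nabla h(\Gamma_\epsilon(t))\to-\nu(q)$ uniformly in $t$. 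Writing $\tilde\mu_\epsilon:=2\epsilon h^{-3}(\Gamma_\epsilon)$ and passing to a further subsequence with $\int_0^1\tilde\mu_\epsilon\,dt\to a\in[0,\infty)$, the $L^1$-bound of the previous step gives
\begin{multline*}
\epsilon\int_0^1\nabla U(\Gamma_\epsilon)\,dt=-\int_0^1\tilde\mu_\epsilon\,\nabla h(\Gamma_\epsilon)\,dt\\
=\nu(q)\int_0^1\tilde\mu_\epsilon\,dt-\int_0^1\tilde\mu_\epsilon\,\bigl(\nabla h(\Gamma_\epsilon)+\nu(q)\bigr)\,dt\longrightarrow a\,\nu(q),
\end{multline*}
the last integral being bounded in modulus by $\sup_t|\nabla h(\Gamma_\epsilon(t))+\nu(q)|\cdot\int_0^1\tilde\mu_\epsilon\,dt\to0$. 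Hence $\nabla V(q)=-a\,\nu(q)$ with $a\geq0$: if $a>0$ this is alternative~(ii), and if $a=0$ then $\nabla V(q)=0$ and we are in alternative~(i).

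I do not expect a serious obstacle, the statement being a combination of conservation of energy (collapse to a point) and the Benci--Giannoni-type estimate (uniform $L^1$-bound on $\epsilon\nabla U(\Gamma_\epsilon)$). The one point needing care is the verification that this $L^1$-bound does not rely on a positive lower bound for $\tau_\epsilon$ --- it does not, because the dangerous term $\tau_\epsilon^{-2}\langle\Gamma_\epsilon',\Psi_\epsilon'\rangle$ is controlled by the uniform bound $\tau_\epsilon^{-2}|\Gamma_\epsilon'|^2\leq 2(K-\min_{\overline{\Omega}}V)$ coming from the energy identity. One should also keep track of the finitely many successive passages to subsequences and of the uniform (rather than merely pointwise) convergence $\nabla h(\Gamma_\epsilon)\to-\nu(q)$ near $\partial\Omega$.
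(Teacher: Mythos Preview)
Your argument is correct. It takes a genuinely different route from the paper's proof, so a brief comparison is in order.

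The paper avoids redoing any estimates by a reparametrization trick: it picks integers $\kappa_\epsilon$ so that $\sigma_\epsilon:=\kappa_\epsilon\tau_\epsilon\in[T_1,T_2]$, observes that the iterated loop $\Theta_\epsilon(t):=\Gamma_\epsilon(\kappa_\epsilon t)$ is again a critical point of $\L^{E_\epsilon}_\epsilon$, and then applies Proposition~\ref{prop:Benci_improved} to $(\Theta_\epsilon,\sigma_\epsilon)$ wholesale. The $\kappa_\epsilon^{-1}$-periodicity of $\Theta_\epsilon$ then forces the $H^1$-limit $\Theta$ to be constant, and the integral identity (i) of Proposition~\ref{prop:Benci_improved} (with $\Theta'\equiv0$) collapses to $-\sigma\langle\nabla V(q),\psi\rangle=\int_{\CCC}\langle\nu(q),\psi\rangle\,d\mu$ for all constant test vectors $\psi$, which yields the dichotomy.

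You instead argue directly: energy conservation gives $\tau_\epsilon^{-2}|\Gamma_\epsilon'|^2\leq 2(K-\min_{\overline\Omega}V)$, which both forces $\|\Gamma_\epsilon'\|_{L^\infty}\to0$ (hence $C^0$-collapse to a point) and, crucially, controls the dangerous term $\tau_\epsilon^{-2}\langle\Gamma_\epsilon',\Psi_\epsilon'\rangle$ in the Benci--Giannoni estimate without any lower bound on $\tau_\epsilon$. The dichotomy then comes from integrating the Euler--Lagrange equation over one period and passing to the limit, splitting on whether $q\in\Omega$ or $q\in\partial\Omega$. This is more elementary and self-contained: it does not invoke Proposition~\ref{prop:Benci_improved}, does not need the iteration trick, and makes explicit exactly which ingredient replaces the missing lower period bound. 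The paper's approach, on the other hand, is shorter to write once Proposition~\ref{prop:Benci_improved} is in hand, and reuses the measure $\mu$ already constructed there rather than rebuilding the $L^1$-bound and the limit $a=\lim\int\tilde\mu_\epsilon$.
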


\begin{Rmk}
In case (ii) of Proposition \ref{prop:Benci_improved_without_lower_period_bound} the stationary curve $\gamma(t)\equiv q$ describes a particle confined by the potential, see Figure \ref{fig:case2}.
\begin{figure}[hbt]
\frag{V}{$V$}
\frag{G}{$-\nabla V(q)$}
\frag{w}{potential wall}
\frag{p}{particle at $q\in\partial\Omega$}
\includegraphics[scale=1]{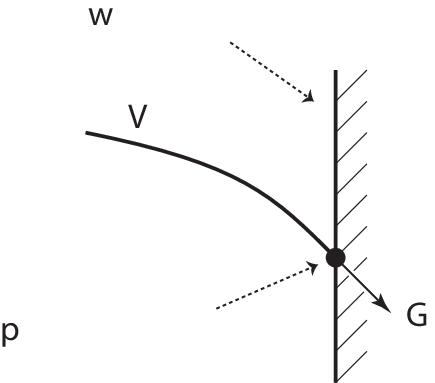}
\caption{}\label{fig:case2}
\end{figure}
\end{Rmk}

\begin{proof}
We choose a sequence of positive integers $\{\kappa_\epsilon\}$ such that $T_1<\kappa_\epsilon\tau_\epsilon<T_2$ for suitable $T_2>T_1>0$ and we define $(\Theta_\epsilon,\sigma_\epsilon)\in H^1(S^1;\overline\Omega)\times\R_{>0}$ by $\Theta_\epsilon(t):=\Gamma_\epsilon(\kappa_\epsilon t)$ and $\sigma_\epsilon:=\kappa_\epsilon\tau_\epsilon$. We point out that $(\Theta_\epsilon,\sigma_\epsilon)$ is a  critical point of the action functional $\L^{E_\epsilon}_\epsilon$. By Proposition~\ref{prop:Benci_improved} we conclude that, up to a subsequence, $(\Theta_\epsilon,\sigma_\epsilon)\to(\Theta,\sigma)$ in $H^1(S^1,\overline{\Omega})\times\R_{>0}$ as $\epsilon\to0$. In particular $\Theta_\epsilon\to\Theta$ in $C^0$.

We claim that $\Theta$ is a constant curve. Indeed, let us assume by contradiction that there exist $t_1<t_2$ such that 
\beq\label{e:Theta_not_constant_by_contradiction}
|\Theta(t_1)-\Theta(t_2)|>0
\eeq
Notice that each $\Theta_\epsilon$ is ${\kappa_\epsilon}^{-1}$ periodic, and in particular
\beq
\Theta_\epsilon(t_2)=\Theta_\epsilon(t_2-j \kappa_\epsilon^{-1}),\qquad \forall j\in\N\,. 
\eeq
Since $\kappa_\epsilon\to\infty$, we can find a sequence of positive integers $\{j_\epsilon\}$ such that $j_\epsilon \kappa_\epsilon^{-1}\to t_2 - t_1$. This, together with the $C^0$ convergence $\Theta_\epsilon\to\Theta$, implies
\beq
\Theta(t_1)
=
\lim_{\epsilon\to0}
\Theta_\epsilon(t_1)
=
\lim_{\epsilon\to0}
\Theta_\epsilon(t_2-j_\epsilon \kappa_\epsilon^{-1})
=
\lim_{\epsilon\to0}
\Theta_\epsilon(t_2)
=
\Theta(t_2),
\eeq
which contradicts~\eqref{e:Theta_not_constant_by_contradiction}.

Since each curve $\Theta_\epsilon$ is an iteration of $\Gamma_\epsilon$, the fact that $\Theta_\epsilon$  converges in $C^0$ to a constant curve forces $\Gamma_\epsilon$  to converge in $C^0$ to the same constant curve $\Gamma=\Theta\equiv q\in\overline{\Omega}$. Then,  the integral equation in point~(i) of Proposition~\ref{prop:Benci_improved}  reduces to
\beq
-\int_0^\sigma \langle \nabla V(q),\psi \rangle\, dt = \int_{\CCC} \langle \nu(q),\psi \rangle\, d\mu
\qquad
\forall \psi\in C^\infty(\R/\sigma\Z;\R^N).
\eeq
Here,  $\CCC=\emptyset$ if $q\in\Omega$ and $\CCC=\R/\sigma\Z$ if $q\in\partial\Omega$. This immediately implies the proposition.
\end{proof}

\section{Proof of Theorem~\ref{thm:main}}\label{sec:proof_main}

In order to prove Theorem~\ref{thm:main}, it only remains to build the sequence $\{(\Gamma_\epsilon,\tau_\epsilon)\}$ of critical points of the free-time action functionals $\{\L^{E_\epsilon}_\epsilon\}$, as needed in Proposition~\ref{prop:Benci_improved}. This will be carried out in the Hamiltonian formulation, by considering  the Hamiltonian function
\bea
H_\epsilon:T^*\Omega=\Omega\times\R^N&\to\R\\
(q,p)&\mapsto\tfrac12|p|^2+V(q)+\epsilon U(q)\;.
\eea
This Hamiltonian is the Legendre-dual to the Lagrangian $L_\epsilon$ defined in~\eqref{e:Lagrangian_epsilon}. By Legendre duality, $\tau$-periodic Hamiltonian orbits $v:\R/\tau\Z\to\Omega\times\R^N$ of $H_\epsilon$ with energy $H_\epsilon(v)=E$ are in one-to-one correspondence to $\tau$-periodic  solutions $\gamma=\pi(v)$ of the Euler-Lagrange system of $L_\epsilon$ with energy $E_\epsilon(\gamma)=E$ via the projection $\pi:T^*\Omega\to\Omega$.

We begin with the following
\begin{Lemma}\label{lemma:E_above_maxV_regular}
Any energy value $E>\max_{\overline{\Omega}} V$ is a regular value of the Hamiltonian function $H_\epsilon$ provided $\epsilon>0$ is sufficiently small.  
\end{Lemma}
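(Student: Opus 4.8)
The plan is to show that the only possible critical points of $H_\epsilon$ on the level set $\{H_\epsilon = E\}$ cannot occur, for $\epsilon$ small, by exploiting the structure $H_\epsilon(q,p) = \tfrac12|p|^2 + V(q) + \epsilon U(q)$. A point $(q,p) \in T^*\Omega$ is critical for $H_\epsilon$ precisely when $dH_\epsilon(q,p) = 0$, i.e. when $p = 0$ (from $\partial_p H_\epsilon = p$) and $\nabla V(q) + \epsilon \nabla U(q) = 0$ (from $\partial_q H_\epsilon$). So a critical point on the level $E$ would be a point $q \in \Omega$ with $p=0$, $V(q) + \epsilon U(q) = E$, and $\nabla V(q) = -\epsilon \nabla U(q)$.

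First I would observe that since $E > \max_{\overline\Omega} V$, there is a gap $\delta := E - \max_{\overline\Omega} V > 0$. At a critical point one needs $\epsilon U(q) = E - V(q) \geq \delta > 0$, so $U(q) \geq \delta/\epsilon$, which by the definition $U = h^{-2}$ forces $h(q) \leq \sqrt{\epsilon/\delta}$; in particular $q$ lies in the collar where $h = \mathrm{dist}_{\partial\Omega}$, so $q$ is close to $\partial\Omega$ when $\epsilon$ is small, and there $|\nabla h| = 1$. Next I would use the other equation: $|\nabla V(q)| = \epsilon |\nabla U(q)| = \epsilon \cdot \frac{2|\nabla h(q)|}{h^3(q)} = \frac{2\epsilon}{h^3(q)}$. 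But $h(q) \leq \sqrt{\epsilon/\delta}$ gives $\frac{2\epsilon}{h^3(q)} \geq \frac{2\epsilon}{(\epsilon/\delta)^{3/2}} = 2\delta^{3/2}\epsilon^{-1/2}$, which tends to $+\infty$ as $\epsilon \to 0$. Since $\nabla V$ is bounded on $\overline\Omega$ (it is continuous on a compact set, say $|\nabla V| \leq M$), we get a contradiction as soon as $2\delta^{3/2}\epsilon^{-1/2} > M$, i.e. for all $\epsilon < 4\delta^3/M^2$. Hence for such $\epsilon$ there are no critical points of $H_\epsilon$ on the level set $\{H_\epsilon = E\}$, which is exactly the statement that $E$ is a regular value.

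The argument is essentially self-contained and elementary; the only mild subtlety is making sure the critical point actually lies in the region where $h = \mathrm{dist}_{\partial\Omega}$ so that $|\nabla h| = 1$ can be used — but this is automatic from the bound $h(q) \leq \sqrt{\epsilon/\delta} \leq d_0$ for $\epsilon$ small, as noted above. I do not expect a genuine obstacle here; the main point to get right is the bookkeeping of the two competing estimates (the lower bound $U(q) \geq \delta/\epsilon$ pushing $q$ toward $\partial\Omega$, and the resulting blow-up of $\epsilon|\nabla U(q)|$ against the fixed bound on $|\nabla V|$), and to record the explicit threshold on $\epsilon$ for later use.
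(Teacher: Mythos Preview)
Your proposal is correct and follows essentially the same approach as the paper: reduce to showing that the potential $V+\epsilon U$ has no critical point on the set $\{V+\epsilon U=E\}$, use the energy constraint to bound $h(q)\leq\sqrt{\epsilon/\delta}$ (so that $q$ lies in the collar where $|\nabla h|=1$), and then note that $\epsilon|\nabla U(q)|\geq 2\delta^{3/2}\epsilon^{-1/2}$ overwhelms the bounded $|\nabla V|$. The paper's write-up differs only cosmetically, estimating $|\nabla V+\epsilon\nabla U|$ via the reverse triangle inequality rather than reading off $|\nabla V|=\epsilon|\nabla U|$ from the critical-point equation.
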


\begin{proof}
Since $H_\epsilon$ is a classical Hamiltonian (i.e.\ of the form kinetic energy plus potential), the energy hypersurface $\Sigma_\epsilon$ is regular provided the boundary of its projection into the base, i.e.\ the set 
\beq
\Upsilon_\epsilon=\partial\,\pi(\Sigma_\epsilon)
=\{V+\epsilon U=E\}\subset\R^N\,,
\eeq  
does not contain any critical point of the potential $V+\epsilon U$. This is always verified if $\epsilon$ is sufficiently small. Indeed, for $q\in\Upsilon_\epsilon$ we have by~\eqref{e:definition_of_U}
\beq\label{e:h_on_Upsilon}
h^2(q)=\frac{\epsilon}{E-V(q)}\,,
\eeq
and therefore
\bea\label{e:estimating_V+eU}
|\nabla V(q)+\epsilon\nabla U(q)|
&\geq
|\epsilon\nabla U(q)|-|\nabla V(q)|\\
&=
\frac{2\epsilon}{h^3(q)}\, |\nabla h(q)|
-|\nabla V(q)|\\
&=
2\epsilon^{-1/2}(E-V(q))^{3/2}|\nabla h(q)|
-|\nabla V(q)|\\
&\geq
2\epsilon^{-1/2}(E-{\textstyle\max_{\overline\Omega}}V)^{3/2}|\nabla h(q)|
-|\nabla V(q)|\,.
\eea
Equation~\eqref{e:h_on_Upsilon} implies that $h|_{\Upsilon_\epsilon}\to0$ uniformly as $\epsilon\to0$. Hence, for sufficiently small $\epsilon$ we have $h|_{\Upsilon_\epsilon}=\mathrm{dist}_{\partial\Omega}|_{\Upsilon_\epsilon}$ and  $|\nabla h|\geq \tfrac12$ on $\Upsilon_\epsilon$. Combining this with~\eqref{e:estimating_V+eU} we obtain
\beq
|\nabla V(q)+\epsilon\nabla U(q)|
\geq
\epsilon^{-1/2}
(E-{\textstyle\max_{\overline\Omega}}V)^{3/2}
-
|\nabla V(q)|,\qquad\forall q\in\Upsilon_\epsilon,
\eeq
from which we conclude that $\nabla V+\epsilon\nabla U$ does not vanish on $\Upsilon_\epsilon$ for $\epsilon$ sufficiently small.
\end{proof}

From now on we fix an energy value $E>\max_{\overline{\Omega}}V$ and we consider $\epsilon>0$ small enough so that Lemma~\ref{lemma:E_above_maxV_regular} holds. In particular, the energy hypersurface
\beq
\Sigma_\epsilon:=\{H_\epsilon=E\}
\eeq 
is a smooth and non-empty closed manifold. Notice that $\pi:T^*\Omega\to\Omega$ projects $\Sigma_\epsilon$ into the compact set $\Omega_\epsilon:=\{\epsilon U\leq E\}$. Thus, we can modify the potential $\epsilon U$ outside $\Omega _\epsilon $ and extend it to a global potential  $U_\epsilon\in C^\infty(\R^N)$ such that $U_\epsilon=\epsilon U$ on $\Omega_{\epsilon/2}$, $U_\epsilon>E$ outside $\Omega_{\epsilon/2}$ and $U\equiv E'> E$ outside $\Omega$, see Figure \ref{fig:graph}. Analogously, we extend $V$ to a compactly supported function $V\in C^\infty(\R^N)$ such that $V>-(E'-E)$. In particular $V+U_\epsilon>E$ outside $\Omega_\epsilon$, and $V+U_\epsilon\equiv E'$ outside a compact neighborhood of $\overline\Omega$.

\frag[n]{H}{$\epsilon U$}
\frag[n]{h}{$U_\epsilon$}
\frag[n]{Hh}{$\epsilon U\equiv U_\epsilon$}
\frag[n]{e}{$E$}
\frag[n]{f}{$E'$}
\frag[n]{o}{$\partial\Omega_\epsilon$}
\frag[n]{u}{$\partial\Omega_{\epsilon/2}$}
\frag[n]{O}{$\Omega$}
\frag[n]{0}{$0$}
\begin{figure}[h]
\includegraphics[scale=1]{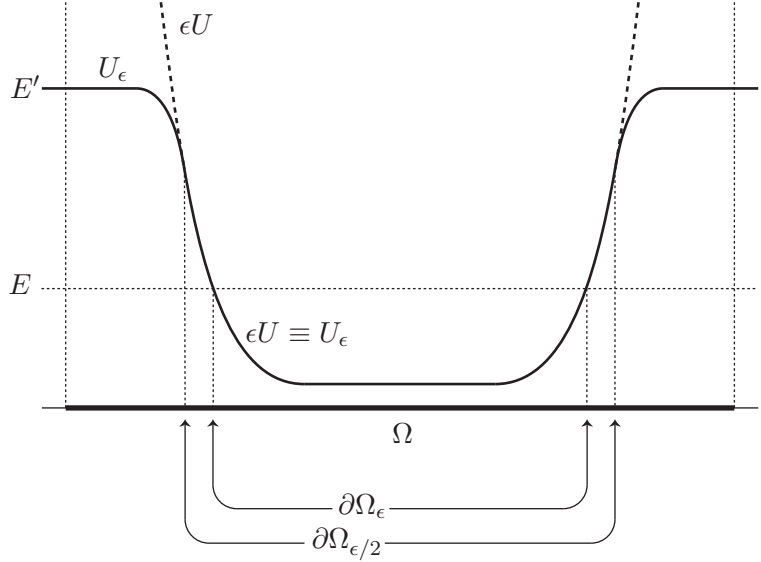}
\caption{The potential $\epsilon U$ and its modification $U_\epsilon$.}\label{fig:graph}
\end{figure}

For technical reasons we compactify $\R^N$ to $S^N$ in such a way that 
\beq
\mathrm{vol}(S^N)\gg\mathrm{vol}(\Omega), 
\eeq
and we further extend $U_\epsilon$ and $V$ to  smooth functions on $S^N$ that we still denote by $U_\epsilon$ and $V$. Finally, we introduce the modified Hamiltonian
\bea
K_\epsilon:T^*S^N&\to\R\\
(q,p)&\mapsto\tfrac12|p|^2+V(q)+ U_\epsilon(q)\;.
\eea
Notice that $\Sigma_\epsilon =\{K_\epsilon=E\}$ and the Hamiltonian flows of $H_\epsilon$ and $K_\epsilon$ agree on $\Sigma_\epsilon$. 

Since $K_\epsilon$ is a classical Hamiltonian a well-known result in Hamiltonian dynamics asserts that the energy hypersurface $\Sigma_\epsilon$ is of restricted contact type, i.e.~there exists a primitive $\lambda_\epsilon$ of the canonical symplectic form $\omega$ of $T^*S^N$ such that $\lambda_\epsilon|_{\Sigma_\epsilon}$ is a contact form. We recall that a primitive $\lambda_\epsilon$ of $\omega$ restricts to a contact form on $\Sigma_\epsilon$ if and only if the associated Liouville vector field $P_\epsilon$, defined by $\omega(P_\epsilon,\cdot)=\lambda_\epsilon$, is transverse to $\Sigma_\epsilon$. This is equivalent to asking that $\lambda_\epsilon(X_\epsilon)\neq0$, since 
\beq
\lambda_\epsilon(X_\epsilon)
=
\omega(P_\epsilon,X_\epsilon)
=
dK_\epsilon(P_\epsilon).
\eeq
For later purposes, we need to show that we can choose  $\lambda_\epsilon$ such that $\lambda_\epsilon(X_\epsilon)$ is bounded away from zero uniformly in $\epsilon$.

\begin{Prop}\label{prop:uniform_contact_type_estimate}
We fix $E>\max_{\overline\Omega} V$. For $\epsilon>0$ small enough there exists a 1-form $\lambda_\epsilon$ on $T^*S^N$ with $d\lambda_\epsilon=\om$ which restricts to a contact form on $\Sigma_\epsilon=\{K_\epsilon=E\}$. Moreover, on $\Sigma_\epsilon$ we have the estimate
\beq
\lambda_\epsilon(X_\epsilon)
\geq
\frac{(E-\textstyle\max_{\overline\Omega}V)^3}{2\bigl[(E-\textstyle\max_{\overline\Omega}V)^2+ 48(E-{\textstyle\min_{\overline\Omega}}V)^2\bigr]}
=:\Lambda(E)>0\;.
\eeq
\end{Prop}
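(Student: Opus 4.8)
The plan is to write down an explicit primitive $\lambda_\epsilon$ of the form $\lambda_\epsilon = \lambda_0 + df_\epsilon$, where $\lambda_0 = p\,dq$ is the canonical (Liouville) $1$-form on $T^*S^N$ and $f_\epsilon \in C^\infty(S^N)$ is a function on the base to be chosen, and then estimate $\lambda_\epsilon(X_\epsilon)$ along $\Sigma_\epsilon$. Since $X_\epsilon$ is the Hamiltonian vector field of the classical Hamiltonian $K_\epsilon = \tfrac12|p|^2 + W_\epsilon(q)$ with $W_\epsilon := V + U_\epsilon$, in canonical coordinates $X_\epsilon = p\,\partial_q - \nabla W_\epsilon(q)\,\partial_p$, so that $\lambda_0(X_\epsilon) = |p|^2$ and $df_\epsilon(X_\epsilon) = \langle \nabla f_\epsilon(q), p\rangle$. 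Hence on $\Sigma_\epsilon$, where $\tfrac12|p|^2 = E - W_\epsilon(q)$, we get
\beq
\lambda_\epsilon(X_\epsilon) = |p|^2 + \langle \nabla f_\epsilon(q), p\rangle = 2\bigl(E - W_\epsilon(q)\bigr) + \langle \nabla f_\epsilon(q), p\rangle\,.
\eeq
The idea is to choose $f_\epsilon$ so that the drift term $\langle\nabla f_\epsilon,p\rangle$ is small compared to $2(E-W_\epsilon)$ precisely in the dangerous region near $\partial\Omega$, where $W_\epsilon = V + \epsilon U$ is close to $E$ and the leading term degenerates.

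The natural choice, mimicking the constant-potential billiard case, is to take $f_\epsilon$ to be (a cutoff of) a multiple of the distance-type function $h$, say $f_\epsilon = c\cdot\chi(h)\,h$ for a suitable constant $c$ and cutoff $\chi$, so that $\nabla f_\epsilon \approx c\,\nabla h$ near $\partial\Omega$. Near $\partial\Omega$ on $\Sigma_\epsilon$ we have from \eqref{e:h_on_Upsilon}-type identities $\tfrac12|p|^2 = E - V - \epsilon U = E - V - \epsilon h^{-2}$; this is nonnegative only where $\epsilon h^{-2} \le E - V$, i.e. $h^2 \ge \epsilon/(E-V)$, so $|p|$ is genuinely small only in a thin shell. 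We then bound $|\langle\nabla f_\epsilon,p\rangle| \le c|\nabla h|\,|p| \le c|p|$, and compare $c|p|$ against $2(E-W_\epsilon) = |p|^2$: whenever $|p| \ge 2c$ the leading term dominates by at least $|p|^2 - c|p| \ge \tfrac12|p|^2$. In the complementary region $|p| \le 2c$, one must instead use that $f_\epsilon$ only has support where $h \le 2d_0$, so $W_\epsilon \ge \epsilon U$ there, and re-examine $2(E - W_\epsilon) + \langle \nabla f_\epsilon,p\rangle$ directly; the point is that in this region $h$ is bounded below (because $|p|^2 = 2(E - V - \epsilon h^{-2})$ small forces $\epsilon h^{-2}$ close to $E - V$, but then one needs the gradient of $f_\epsilon$ to actually point inward). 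This is where the sign of $\langle\nabla h,p\rangle$ and the precise choice of $c$ enter, and one optimizes $c$ to get the stated constant $\Lambda(E)$, which explains the shape $\tfrac{(E-\max V)^3}{2[(E-\max V)^2 + 48(E-\min V)^2]}$ — the numerator being the cube coming from $\epsilon^{-1/2}(E-V)^{3/2}\cdot|\nabla h|$ factors as in Lemma~\ref{lemma:E_above_maxV_regular}, and the $48$ absorbing the various $\tfrac12$'s and the comparison constant.

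Concretely, the steps I would carry out are: (1) set $\lambda_\epsilon = p\,dq + df_\epsilon$ and reduce, on $\Sigma_\epsilon$, to estimating $2(E - W_\epsilon) + \langle\nabla f_\epsilon, p\rangle$ from below; (2) choose $f_\epsilon = c\,\chi(h)h$ with $\chi$ a cutoff making $f_\epsilon$ globally defined on $S^N$ and equal to $ch$ for $h$ small, and record $\nabla f_\epsilon = c\nabla h$ where $h \le d_0$, $|\nabla f_\epsilon|\le 2c$ everywhere; (3) away from $\partial\Omega$ (i.e. $h \ge d_0$, hence also outside $\Omega$ where $W_\epsilon \equiv E'$ or $W_\epsilon \ge E$), observe $2(E-W_\epsilon)$ is either already bounded below by a fixed positive amount or $|p|$ is bounded below, making the drift term harmless; (4) in the shell $h \le d_0$, split according to whether $|p| \ge 2c$ (leading term wins) or $|p| < 2c$ (use the energy relation $\epsilon h^{-2} = E - V - \tfrac12|p|^2$ to bound $h$ below by $\approx \sqrt{\epsilon/(E-V)}$ and estimate more carefully, using that $\nabla f_\epsilon = c\nabla h$ with $\langle\nabla h(q),p\rangle$ controlled by the Hamiltonian flow being nearly tangent to level sets of $h$); (5) optimize over $c$ to extract the explicit lower bound $\Lambda(E)$; and (6) check that $\lambda_\epsilon(X_\epsilon) > 0$ everywhere on $\Sigma_\epsilon$ implies $\lambda_\epsilon|_{\Sigma_\epsilon}$ is a contact form, as recalled in the paragraph preceding the proposition. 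I expect the main obstacle to be step (4): making the argument uniform in $\epsilon$ in the thin shell where both $|p|$ and $E - W_\epsilon$ are small requires carefully exploiting that the kinetic term and the potential term are both controlled there by the same quantity $h$, and getting the numerical constant $48$ is exactly the bookkeeping of all the intermediate inequalities — in particular the factor arising from $|\nabla h| \ge \tfrac12$ versus $|\nabla h| \le 1$ and from balancing $c|p|$ against $|p|^2$. The factor $(E - \min_{\overline\Omega} V)$ enters because that is the uniform upper bound for $\tfrac12|p|^2$ on $\Sigma_\epsilon$ away from the boundary shell, which is the worst case for the drift term $c|p|$.
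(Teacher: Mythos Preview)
Your ansatz $\lambda_\epsilon = p\,dq + df_\epsilon$ with $f_\epsilon$ a function of $q$ alone cannot work, and this is a genuine gap rather than a bookkeeping issue. With such a choice one has, as you correctly compute,
\[
\lambda_\epsilon(X_\epsilon)=|p|^2+\langle\nabla f_\epsilon(q),p\rangle,
\]
and this expression \emph{vanishes identically} at every point of $\Sigma_\epsilon$ with $p=0$, i.e.\ along the nonempty set $\Upsilon_\epsilon=\{q:V(q)+\epsilon U(q)=E\}$ from Lemma~\ref{lemma:E_above_maxV_regular}. No choice of $f_\epsilon(q)$, cutoff, or constant $c$ can produce a positive lower bound there. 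Your step~(4) tries to salvage the region where $|p|$ is small by appealing to ``$\langle\nabla h,p\rangle$ controlled by the flow being nearly tangent to level sets of $h$'', but this only makes the correction term small, not positive; at $p=0$ both the leading term and the correction are exactly zero.

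The paper avoids this by taking the correction function to depend on $p$ as well: it sets $\lambda_\epsilon=\lambda-C\epsilon\,du$ with $u(q,p)=\langle\nabla U(q),p\rangle$. Because $u$ is linear in $p$, the $1$-form $du$ has a $dp$-component $\sum_i\partial_{q_i}U\,dp_i$, and pairing that with the $\partial_p$-part $-\nabla W_\epsilon\,\partial_p$ of $X_\epsilon$ produces the term $C\epsilon\,\langle\nabla U,\nabla W_\epsilon\rangle\approx C\epsilon^2|\nabla U|^2$. On $\Sigma_\epsilon$ this equals $4C(E-V-\tfrac12|p|^2)^3|\nabla h|^2$, which is strictly positive precisely where $|p|$ is small (and large there, since $|\nabla h|\to1$ near $\partial\Omega$). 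The proof then splits $\Sigma_\epsilon$ into $\{|p|^2\ge E-\max V\}$, where $|p|^2$ alone suffices, and $\{|p|^2\le E-\max V\}$, where this new cubic term dominates; the constant $C$ is chosen to balance the two regions, and the $48$ comes from the factor $6$ in the Hessian term $6C\epsilon^2 h^{-4}|dh(q)p|^2$ together with the $1/8$ from $(\tfrac12)^3$. If you want to repair your argument, replace your base function $f_\epsilon(q)$ by a function of the form $c\,\langle\nabla h(q),p\rangle$ (or equivalently $c\epsilon\,\langle\nabla U(q),p\rangle$) and redo the computation.
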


\begin{proof}
We denote by $\lambda=\sum_ip_idq_i$ the Liouville 1-form on $T^*S^N$. The Hamiltonian vector field $X_\epsilon$ of $K_\epsilon$ is given in local coordinates
by
\beq
X_\epsilon=\sum_{i} 
\left[
p_i\, \frac{\partial}{\partial q_i}
-
\biggl(
\frac{\partial U_\epsilon}{\partial q_i}
+
\frac{\partial V}{\partial q_i}
\biggr)\,
\frac{\partial}{\partial p_i}
\right].
\eeq
Thus, we have $\lambda(X_\epsilon)=|p|^2\geq0$. Now we consider $u:T^*\Omega\to\R$ given by 
\beq
u(q,p)= dU(q)[p]=\sum_i\frac{\p U}{\p q_i}(q)p_i 
\eeq
and define the 1-form 
\beq
\lambda_\epsilon
:=
\lambda-C\epsilon\,du
=
\lambda 
- 
C\epsilon \sum_{i,j} \frac{\partial^2 U_\epsilon}{\partial q_i\, \partial q_j}\,p_i\,dq_j
-
C\epsilon \sum_{i} \frac{\partial U_\epsilon}{\partial q_i}\,dp_i,
\eeq 
where $C>0$ is a constant independent of $\epsilon$ that we will fix later. Since $U_\epsilon=\epsilon U$ on $\Sigma_\epsilon$ and using the definition of $U$ (see~\eqref{e:definition_of_U}), the function $\lambda_\epsilon(\Sigma_\epsilon)$ on $\Sigma_\epsilon$ is given by
\bea
\lambda_\epsilon(X_\epsilon) \bigr|_{\Sigma_\epsilon}
=\, &
|p|^2 
- 
C\epsilon^2\, \hess U(q)[p,p]+
C\epsilon^3\, |\nabla U|^2+
C\epsilon^2\, \langle\nabla U,\nabla V\rangle 
\\
=\, &
|p|^2
+
2 C\epsilon^2
h^{-3}\,\hess h(q)[p,p]
+
4C\epsilon^3 h^{-6}\, |\nabla h|^2
\\
& -
6C\epsilon^2 
h^{-4}\,|dh(q)p|^2
-2C\epsilon^2 h^{-3} \langle\nabla h,\nabla V\rangle.
\eea
Now we notice that for $(q,p)\in\Sigma_\epsilon$ we have
\beq\label{e:h(q)_funct_of_p}
h^2(q)=\frac{\epsilon}{E-V(q)-\tfrac12|p|^2}\;.
\eeq
We choose $\kappa\geq0$ such that
\beq
\hess h(q)[p,p]\geq-\kappa|p|^2\quad\forall q\in{\Omega}.
\eeq
Then using $|\nabla h|\leq 1$ we have the estimate
\bea
\lambda_\epsilon(X_\epsilon)\bigr|_{\Sigma_\epsilon}
\geq\,&
|p|^2 \bigl( 1 - 6C\epsilon^2 h^{-4} \bigr)
-2 C\epsilon^2h^{-3}\kappa|p|^2
+4C\epsilon^3 h^{-6}\, |\nabla h|^2
-2C\epsilon^2 h^{-3} \langle\nabla h,\nabla V\rangle\\
=\,&
|p|^2 \left( 1 - 6C(E-V(q)-\tfrac12|p|^2)^2 \right)
-2 C\epsilon^{1/2}\kappa|p|^2(E-V(q)-\tfrac12|p|^2)^{3/2}\\
&+
4C (E-V(q)-\tfrac12|p|^2)^3\, |\nabla h|^2
-2C\epsilon^{1/2} (E-V(q)-\tfrac12|p|^2)^{3/2} \langle\nabla h,\nabla V\rangle\\
=\,&
|p|^2 \left( 1 - 6C(E-V(q)-\tfrac12|p|^2)^2 \right)
+
4C (E-V(q)-\tfrac12|p|^2)^3\, |\nabla h|^2\\
&-2C\epsilon^{1/2} (E-V(q)-\tfrac12|p|^2)^{3/2}\Big[\kappa|p|^2+\langle\nabla h,\nabla V\rangle\Big]\,.
\eea
Now, we require $C\equiv C(E)>0$ to satisfy
\beq
6C(E-{\textstyle\min_{\overline\Omega}}V)^2<1,
\eeq
and estimate further
\bea\label{e:estimate_alpha_X}
\lambda_\epsilon(X_\epsilon)\bigr|_{\Sigma_\epsilon}
\geq\,&
|p|^2 \left( 1 - 6C(E-{\textstyle\min_{\overline\Omega}}V)^2 \right)
+
4C\,(E-{\textstyle\max_{\overline\Omega}} V-\tfrac12|p|^2)^3\, |\nabla h|^2
-
c_\epsilon\,
,
\eea
where 
\beq
c_\epsilon:=2C\epsilon^{1/2} (E-{\textstyle\min_{\overline\Omega}}V)^{3/2}\big[2\kappa(E-{\textstyle\min_{\overline\Omega}}V)+ {\textstyle\max_{\overline\Omega}}|\nabla V|\big] \longrightarrow 0\qquad\mbox{as }\epsilon\to0\,.
\eeq
For $\epsilon$ small enough, equation~\eqref{e:h(q)_funct_of_p} and the definition of $h$ implies that we have  $|\nabla h|\geq1/2$ in the region $\Sigma_\epsilon\cap\{|p|^2\leq E-\max_{\overline\Omega}V \}$. Thus, \eqref{e:estimate_alpha_X} implies that on $\Sigma_\epsilon\cap\{|p|^2\leq E-\textstyle\max_{\overline\Omega}V\}$
\bea
\lambda_\epsilon(X_\epsilon)
&\geq
C(E-{\textstyle\max_{\overline\Omega}} V- \tfrac12(E-\textstyle\max_{\overline\Omega}V))^3\, 
-
c_\epsilon\\
&=\tfrac18C(E-\textstyle\max_{\overline\Omega}V)^3\, 
-
c_\epsilon\;.
\eea
On $\Sigma_\epsilon\cap\{|p|^2\geq E-\textstyle\max_{\overline\Omega}V\}$ we can estimate
\bea
\lambda_\epsilon(X_\epsilon)
\geq
(E-\textstyle\max_{\overline\Omega}V)\left( 1 - 6C(E-{\textstyle\min_{\overline\Omega}}V)^2 \right)
-c_\epsilon\;.
\eea
Since $c_\epsilon\to0$ as $\epsilon\to0$,  for sufficiently small $\epsilon$ we have
\bea
\lambda_\epsilon(X_\epsilon)\bigr|_{\Sigma_\epsilon}
\geq\tfrac12\min\Big\{\tfrac18C(E-\textstyle\max_{\overline\Omega}V)^3,
(E-\textstyle\max_{\overline\Omega}V)\left( 1 - 6C(E-{\textstyle\min_{\overline\Omega}}V)^2 \right)\Big\}
\eea
Hence, by setting
\bea
C:=\frac{8}{(E-\textstyle\max_{\overline\Omega}V)^2+ 48(E-{\textstyle\min_{\overline\Omega}}V)^2}
\eea
we obtain 
\beq
\lambda_\epsilon(X_\epsilon)\bigr|_{\Sigma_\epsilon}\geq\frac{(E-\textstyle\max_{\overline\Omega}V)^3}{2\bigl[(E-\textstyle\max_{\overline\Omega}V)^2+ 48(E-{\textstyle\min_{\overline\Omega}}V)^2\bigr]}>0
\eeq
\end{proof}

Let $R_\epsilon$ be the Reeb vector field on $\Sigma_\epsilon$ associated to the contact form $\lambda_\epsilon|_{\Sigma_\epsilon}$. The above proposition implies that $X_\epsilon=r_\epsilon R_\epsilon$ where $r_\epsilon:\Sigma_\epsilon\to\R_{>0}$ is a smooth function that is bounded from below by $\Lambda(E)$. In particular, the periodic orbits of $X_{\epsilon}$ and $R_{\epsilon}$ agree up to reparamentrization. More precisely, if $v$ is a Reeb orbit of period $T$ then the corresponding orbit of $X_{\epsilon}$ has period $\tau_\epsilon$ satisfying
\beq
\tau_\epsilon\cdot \Lambda(E)\leq T\;.
\eeq 

Since $\pi(\Sigma_\epsilon)\subset\Omega$ under the projection $\pi:T^*S^N\to S^N$ the energy hypersurface $\Sigma_\epsilon$ is Hamiltonianly displaceable, that is, there exists a Hamiltonian diffeomorphism $\phi_G\in\Ham_c(T^*S^N)$ generated by a compactly supported Hamiltonian function $G:S^1\times T^*S^N\to\R$ such that
\beq
\phi_G(\Sigma_\epsilon)\cap\Sigma_\epsilon=\emptyset\;.
\eeq
In fact, let $a:S^N\to\R$ be any function which has no critical points in $\overline{\Omega}$. If we extend $a$ to $A:=a\circ\pi:T^*S^N\to\R$ then the Hamiltonian flow of $A$ displaces any compact subset of $T^*S^N|_{\overline{\Omega}}$, in particular, $\Sigma_\epsilon$. Thus, if we cut off $A$ near infinity we obtain a displacing Hamiltonian diffeomorphism in $\Ham_c(T^*S^N)$.

We recall that the displacement energy $e(\Sigma_\epsilon)$ is defined as
\beq
e(\Sigma_\epsilon):= 
\inf\left\{\int_0^1\Big[\max_{T^*S^N}G(t,\cdot)-\min_{T^*S^N}G(t,\cdot)\Big]dt\ \bigg|\ \phi_G(\Sigma_\epsilon)\cap\Sigma_\epsilon=\emptyset\right\}\;.
\eeq
\begin{Lemma}\label{lem:bound_on_displacement_energy}
The displacement energy of $\Sigma_\epsilon$ can be bounded as follows
\beq
\textstyle
e(\Sigma_\epsilon)\leq2(2E-2\min_{\overline{\Omega}} V)^{1/2}\cdot\mathrm{diam}(\Omega)
\eeq
Here $\mathrm{diam}(\Omega)$ denotes the diameter of $\Omega\subset\R^N$ and $E$ is the energy value we fixed.
\end{Lemma}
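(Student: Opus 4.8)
The plan is to displace $\Sigma_\epsilon$ off itself by a single, explicitly constructed compactly supported Hamiltonian whose time-one flow acts, on the region of $T^*S^N$ actually occupied by $\Sigma_\epsilon$, as a rigid translation of the base coordinate. This is a genuine base translation, not the fibrewise ``push to infinity'' used just above, and that is exactly why it produces a bound independent of $\epsilon$.

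First I would localize $\Sigma_\epsilon$. Since $\pi(\Sigma_\epsilon)\subseteq\Omega_\epsilon\subseteq\Omega_{\epsilon/2}$, on $\Sigma_\epsilon$ one has $U_\epsilon=\epsilon U\geq0$, so the energy relation $\tfrac12|p|^2+V(q)+U_\epsilon(q)=E$ forces $\tfrac12|p|^2\leq E-\min_{\overline\Omega}V$; hence $\Sigma_\epsilon$ is contained in $\{(q,p)\in T^*S^N \mid q\in\Omega,\ |p|\leq\rho\}$ with $\rho:=(2E-2\min_{\overline\Omega}V)^{1/2}$. I would then carry out the construction inside the Euclidean chart $\R^N\subset S^N$ used to compactify, where the canonical forms are $\lambda=\sum_i p_i\,dq_i$ and $\om=\sum_i dp_i\wedge dq_i$ and everything in sight is compactly contained.

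Next I would build the displacing map. Fix $\xi\in\R^N$ with $|\xi|>\mathrm{diam}(\Omega)$ and a small $\delta>0$; pick $\chi\in C^\infty([0,\infty),[0,1])$ with $\chi\equiv1$ on $[0,\rho]$ and $\chi\equiv0$ on $[\rho+\delta,\infty)$, and $\psi\in C^\infty_c(\R^N,[0,1])$ equal to $1$ on a neighbourhood of the ``prism'' $\{q+t\xi\mid q\in\overline\Omega,\ t\in[0,1]\}$; set $G(q,p):=\psi(q)\,\chi(|p|)\,\langle\xi,p\rangle\in C^\infty_c(T^*S^N)$. Where both cutoffs are trivial, the flow of $G$ agrees with that of $\langle\xi,p\rangle$, which freezes $p$ and translates $q$ at unit speed along $\xi$ (up to a sign), so any trajectory starting at $(q_0,p_0)$ with $q_0\in\overline\Omega$, $|p_0|\leq\rho$ stays in $\{\psi\equiv1\}\cap\{\chi\equiv1\}$ for $t\in[0,1]$ — its base part traces the prism, its fibre part is frozen in $\{|p|\leq\rho\}$ — whence $\phi_G(q_0,p_0)=(q_0\pm\xi,p_0)$, with base point outside $\overline\Omega$. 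As every point of $\Sigma_\epsilon$ projects into $\Omega$, this gives $\phi_G(\Sigma_\epsilon)\cap\Sigma_\epsilon=\emptyset$. Finally, since $0\leq\psi\leq1$ and $|p|\chi(|p|)\leq\rho+\delta$ for all $p$, one has $|G|\leq|\xi|(\rho+\delta)$ pointwise, so $\int_0^1(\max G-\min G)\,dt\leq2|\xi|(\rho+\delta)$ and hence $e(\Sigma_\epsilon)\leq2|\xi|(\rho+\delta)$; letting $|\xi|\downarrow\mathrm{diam}(\Omega)$ and $\delta\downarrow0$ yields $e(\Sigma_\epsilon)\leq2\,\mathrm{diam}(\Omega)\cdot\rho=2(2E-2\min_{\overline\Omega}V)^{1/2}\mathrm{diam}(\Omega)$.

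I do not expect a real obstacle: this is the standard ``shift'' bound for the displacement energy of a ball bundle, and the only care needed is bookkeeping — checking that the relevant trajectories never escape the region where the cutoffs are trivial (so that the flow really is a pure translation there), and that neither cutoff inflates the oscillation of $G$ in the limit $\delta\to0$. Both are routine.
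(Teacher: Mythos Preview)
Your proof is correct and follows essentially the same route as the paper: both localize $\Sigma_\epsilon\subset\Omega\times B_\rho$ with $\rho=(2E-2\min_{\overline\Omega}V)^{1/2}$, then displace by (a cut-off of) the linear Hamiltonian $G(q,p)=\langle v,p\rangle$ whose time-one map translates the base by $v$, and estimate the oscillation of $G$ by $2\rho|v|$. Your version is simply more explicit about the cutoffs $\psi(q)\chi(|p|)$ and the verification that the relevant trajectories never leave the region where they are trivial; the paper absorbs all of this into the phrase ``cut off $G$ to zero outside an arbitrarily small neighborhood of $\Omega\times B_R$'' and a $+\delta$ in the estimate.
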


\begin{proof}
We recall that $\Sigma_\epsilon=\big\{\tfrac12|p|^2+V(q)+U_\epsilon(q)=E\big\}$. We set $R:=(2E-2\min_{\overline{\Omega}} V)^{1/2}$. Then using $U_\epsilon(q)\geq0$ we have
\beq
\textstyle\Sigma_\epsilon\subset\Omega\times B_R\subset\R^{2N}
\eeq
where $B_R\subset\R^N$ is the ball around $0$ of radius $R$. To estimate the displacement energy we choose a vector $v\in\R^N$ such that $(v+\Omega)\cap\Omega=\emptyset$ and set $G(q,p):=\sum v_ip_i:\R^{2N}\to\R$. Thus, the corresponding Hamiltonian diffeomorphism is $\phi_G(q,p)=(q+v,p)$. In particular, $\phi_G$ displaces $\Sigma_\epsilon$ from itself. To get a compactly supported Hamiltonian function we cut off $G$ to zero outside an arbitrarily small neighborhood of $\Omega\times B_R$. Thus, for any $\delta>0$ we can estimate
\bea
e(\Sigma_\epsilon)&\leq e(\Omega\times B_R)\\
&\leq\int_0^1\Big[\max_{\Omega\times B_R}G(t,\cdot)-\min_{\Omega\times B_R}G(t,\cdot)\Big]dt + \delta\\
&=\max_{\Omega\times B_R}G(q,p)-\min_{\Omega\times B_R}G(q,p) + \delta\\
&\leq \max_{\Omega\times B_R}|v|\cdot|p|-\min_{\Omega\times B_R}|v|\cdot|p| + \delta\\
&= 2R|v| + \delta\;.
\eea
Choosing the optimal vector $v$ together with the definition of $R=(2E-2\min_{\overline{\Omega}} V)^{1/2}$ proves the Lemma.
\end{proof}
The following theorem was proved by Schlenk in \cite{Schlenk_Applications_of_Hofers_geometry_to_Hamiltonian_dynamics}, see also \cite[Theorem 4.9]{Cieliebak_Frauenfelder_Paternain_Symplectic_Topology}.
\begin{Thm}
$\Sigma_\epsilon$ carries a Reeb orbit $v_\epsilon:\R/T\Z\to\Sigma_\epsilon$ with period $T$ bounded by the dis\-place\-ment energy of $\Sigma_\epsilon$, i.e.
\beq
T\leq e(\Sigma_\epsilon)\;.
\eeq
\end{Thm}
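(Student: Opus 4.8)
The plan is to obtain a closed Reeb orbit on $\Sigma_\epsilon$ of controlled period by combining the fact that $\Sigma_\epsilon$ is Hamiltonianly displaceable with finite displacement energy (Lemma~\ref{lem:bound_on_displacement_energy}) and a Floer-theoretic existence mechanism for closed characteristics on displaceable hypersurfaces of contact type. Concretely, I would set up Rabinowitz Floer homology (or equivalently Cieliebak--Frauenfelder--Paternain's leaf-wise intersection/action machinery) for the pair $(\Sigma_\epsilon,\lambda_\epsilon)$ inside the exact symplectic manifold $(T^*S^N,\om=d\lambda)$; since $\lambda_\epsilon$ is a genuine primitive of $\om$ (not merely a Liouville form on a neighborhood) and $\Sigma_\epsilon$ is of restricted contact type by Proposition~\ref{prop:uniform_contact_type_estimate}, the Rabinowitz action functional $\mathcal A^{\lambda_\epsilon}$ is well-defined and its critical points are either points of $\Sigma_\epsilon$ (constant loops) or parametrized closed Reeb orbits, with critical value equal to the period.

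The key steps, in order, would be: (1) Recall that $\Sigma_\epsilon$ separates $T^*S^N$ and bounds a compact region, so the Rabinowitz Floer homology $\RFH(\Sigma_\epsilon)$ is defined. (2) Invoke the vanishing theorem: if $\Sigma_\epsilon$ is displaceable by a Hamiltonian isotopy, then $\RFH(\Sigma_\epsilon)=0$. (3) Observe that $\RFH$ of a hypersurface \emph{without} closed Reeb orbits would be isomorphic to the Morse homology of $\Sigma_\epsilon$ (the constant-orbit part), which is nonzero because $\Sigma_\epsilon$ is a nonempty closed manifold; hence $\Sigma_\epsilon$ must carry at least one closed Reeb orbit $v_\epsilon$. (4) Extract the period bound: the standard spectral estimate in Rabinowitz Floer theory says that the action of the detected critical point — hence the period $T$ of $v_\epsilon$ — is bounded above by the displacement energy $e(\Sigma_\epsilon)$, because a displacing Hamiltonian $G$ furnishes a continuation map whose energy controls the action window, so $T \le e(\Sigma_\epsilon)$. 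Alternatively, one can cite the result of Schlenk \cite{Schlenk_Applications_of_Hofers_geometry_to_Hamiltonian_dynamics} (or \cite[Theorem 4.9]{Cieliebak_Frauenfelder_Paternain_Symplectic_Topology}) directly, which is exactly the packaging of steps (1)--(4): a displaceable restricted-contact-type hypersurface carries a closed Reeb orbit of period at most its displacement energy.

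The main obstacle — or rather the main thing to verify carefully rather than a deep difficulty — is the compatibility of hypotheses: one must check that $T^*S^N$ (after the compactification, with $\mathrm{vol}(S^N)\gg\mathrm{vol}(\Omega)$) is geometrically bounded or otherwise admits a well-behaved Floer theory, that the almost complex structures can be chosen so that the relevant moduli spaces are compact (no bubbling, using the exactness of $\om$ and asphericity of $T^*S^N$), and that $\Sigma_\epsilon$ is of \emph{restricted} contact type in the precise sense required — all of which are already arranged by Proposition~\ref{prop:uniform_contact_type_estimate} and the construction of $K_\epsilon$ on $T^*S^N$. Since all of this is exactly the setting of the cited theorem of Schlenk, the cleanest route is simply to apply that theorem: $\Sigma_\epsilon$ is displaceable by the Hamiltonian diffeomorphism $\phi_G$ constructed above, it is of contact type, and therefore it carries a Reeb orbit $v_\epsilon:\R/T\Z\to\Sigma_\epsilon$ with $T\le e(\Sigma_\epsilon)$, which combined with Lemma~\ref{lem:bound_on_displacement_energy} and the relation $\tau_\epsilon\cdot\Lambda(E)\le T$ will yield the quantitative period bound needed for Proposition~\ref{prop:Benci_improved}.
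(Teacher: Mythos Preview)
Your proposal is correct and lands exactly where the paper does: the paper gives no proof of this theorem at all, but simply cites Schlenk \cite{Schlenk_Applications_of_Hofers_geometry_to_Hamiltonian_dynamics} (and \cite[Theorem 4.9]{Cieliebak_Frauenfelder_Paternain_Symplectic_Topology}), noting in a remark that Schlenk's bound on the symplectic area of a closed characteristic becomes the period bound $T=\int_0^T v_\epsilon^*\lambda_\epsilon\le e(\Sigma_\epsilon)$ because $\Sigma_\epsilon$ is of restricted contact type. Your Rabinowitz--Floer sketch is a reasonable outline of the mechanism behind the \cite{Cieliebak_Frauenfelder_Paternain_Symplectic_Topology} reference (Schlenk's own argument is somewhat different, via Hofer geometry and action selectors rather than $\RFH$), but since you yourself conclude that the cleanest route is to invoke the cited theorem directly, your approach and the paper's coincide.
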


\begin{Rmk}
In fact, Schlenk proves a much more general existence result for closed characteristics $v$ on displaceable hypersurfaces with bounds on the symplectic area enclosed by the closed characteristic. Since $\Sigma_\epsilon$ is of restricted contact type this translates into 
\beq
T=\int_0^T v^*\lambda\leq e(\Sigma_\epsilon)\;.
\eeq
\end{Rmk}

We recall that if $v_\epsilon$ is a Reeb orbit of period $T$ then the corresponding orbit of $X_\epsilon$ has period $\tau_\epsilon$ satisfying
\beq
\Lambda(E)\tau_\epsilon\leq T
\eeq 
where $\Lambda(E)$ is the constant from Proposition \ref{prop:uniform_contact_type_estimate}. Combining this with Lemma \ref{lem:bound_on_displacement_energy} we obtain the following lemma.

\begin{Lemma}\label{lemma:Ham_period_vs_Reeb_period}
The Hamiltonian vector field $X_\epsilon$ on $\Sigma_\epsilon$ has a periodic orbit of period $\tau_\epsilon$ satisfying
\beq
\textstyle
\Lambda(E)\tau_\epsilon\leq e(\Sigma_\epsilon)\leq2(2E-2\min_{\overline{\Omega}} V)^{1/2}\cdot\mathrm{diam}(\Omega)\;,
\eeq 
and thus
\bea \label{e:bound_on_period_tau}
\tau_\epsilon&\leq\frac{2(2E-2\min_{\overline{\Omega}} V)^{1/2}\cdot\mathrm{diam}(\Omega)}{\Lambda(E)}\;.
\eea
\end{Lemma}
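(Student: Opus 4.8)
The statement is essentially a bookkeeping exercise combining the three preceding results, so the plan is simply to assemble them in order. First I would invoke the theorem of Schlenk quoted just above: since by Proposition~\ref{prop:uniform_contact_type_estimate} the hypersurface $\Sigma_\epsilon\subset T^*S^N$ is of restricted contact type (with contact form $\lambda_\epsilon|_{\Sigma_\epsilon}$) and, being projected by $\pi$ into $\overline\Omega$, is Hamiltonianly displaceable, it carries a closed Reeb orbit $v_\epsilon:\R/T\Z\to\Sigma_\epsilon$ whose period satisfies $T\leq e(\Sigma_\epsilon)$.

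Next I would pass from this Reeb orbit to a periodic orbit of the Hamiltonian vector field $X_\epsilon$. By Proposition~\ref{prop:uniform_contact_type_estimate} we have $X_\epsilon=r_\epsilon R_\epsilon$ on $\Sigma_\epsilon$, with $r_\epsilon\geq\Lambda(E)>0$; since $X_\epsilon$ and $R_\epsilon$ are pointwise proportional (and $X_\epsilon$ is tangent to $\Sigma_\epsilon$), the image of $v_\epsilon$ is also a trajectory of $X_\epsilon$, and after reparametrizing by $\tfrac{dt}{ds}=r_\epsilon$ one obtains a $\tau_\epsilon$-periodic orbit of $X_\epsilon$ lying in $\Sigma_\epsilon$ and satisfying $T=\int_0^{\tau_\epsilon} r_\epsilon\,ds\geq\Lambda(E)\,\tau_\epsilon$, i.e.\ $\Lambda(E)\tau_\epsilon\leq T$.

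Finally, chaining $\Lambda(E)\tau_\epsilon\leq T\leq e(\Sigma_\epsilon)$ with the displacement-energy estimate $e(\Sigma_\epsilon)\leq 2(2E-2\min_{\overline\Omega}V)^{1/2}\,\mathrm{diam}(\Omega)$ from Lemma~\ref{lem:bound_on_displacement_energy} yields the first displayed chain of inequalities, and dividing through by $\Lambda(E)>0$ gives the asserted bound on $\tau_\epsilon$.

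No step presents a genuine obstacle, since all three ingredients are already in place; the only point to handle with a little care is the direction of the inequality in the reparametrization step — one needs the lower bound $r_\epsilon\geq\Lambda(E)$ (not an upper bound), so that the Hamiltonian period $\tau_\epsilon$ comes out \emph{smaller} than the Reeb period $T$ up to the factor $\Lambda(E)$ — and this is precisely the content of Proposition~\ref{prop:uniform_contact_type_estimate}, which is why that uniform lower estimate was needed in the first place.
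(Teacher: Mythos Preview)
Your argument is correct and matches the paper's own reasoning: the lemma is stated there as an immediate consequence of Schlenk's theorem, the relation $\Lambda(E)\tau_\epsilon\leq T$ coming from $X_\epsilon=r_\epsilon R_\epsilon$ with $r_\epsilon\geq\Lambda(E)$, and the displacement-energy bound of Lemma~\ref{lem:bound_on_displacement_energy}. Your remark about the direction of the reparametrization inequality is exactly the point, and is handled just as you describe.
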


This, of course, immediately implies that the Euler-Lagrangian equation corresponding to $L_\epsilon$ has a solution $\gamma_\epsilon$ of energy $E_\epsilon(\gamma_\epsilon)=E$  with period $\tau_\epsilon$ satisfying~\eqref{e:bound_on_period_tau}. 

For later purposes we need the additional information that the Morse index of $\gamma_\epsilon$ is bounded by $\tfrac12\dim\Sigma_\epsilon+1=N+1$. It is a classical fact that under Legendre duality between $\L^E_\epsilon$ and $K_\epsilon$ the Morse index and the Conley-Zehnder index agree. More precisely, if $(\Gamma,\tau)\in\Crit(\L^E_\epsilon)$ and the Reeb orbit $v$ correspond under Legendre duality then
\beq
\Morse\big(\Gamma;\L^E_\epsilon|_{H^1\times\{\tau\}}\big)=\CZ(v)\;.
\eeq
This identity has been proved by Viterbo in \cite{Viterbo_A_new_obstruction_to_embedding_Lagrangian_tori}, who extended a previous related result by Duistermaat \cite{Duistermaat_On_the_Morse_index_in_variational_calculus} (see also~\cite{Long_An_Indexing_domains_of_instability_for_Hamiltonian_systems,Abbondandolo_On_the_Morse_index_of_Lagrangian_systems} for alternative proofs).  

Now, let us first assume that the functional $\L^E_\epsilon$ is Morse-Bott. Via Legendre duality this translates, in the Hamiltonian formulation, to the fact that on $\Sigma_\epsilon$ is non-degenerate, i.e.~all Reeb orbits are isolated and non-degenerate, that is, the linearized \Poincare return map along a Reeb orbit has only one eigenvalue equal to 1 (which is necessarily there due to the autonomous character of the Reeb flow.) Then the proof of Theorem 4.9 in \cite{Cieliebak_Frauenfelder_Paternain_Symplectic_Topology} can be improved to show that Conley-Zehnder index of the Reeb orbit $v_\epsilon$ satisfies
\beq
\CZ(v_\epsilon)\in\{N,N+1\}\;.
\eeq
In more detail, it is shown in \cite{Cieliebak_Frauenfelder_Paternain_Symplectic_Topology} that a certain moduli space would be compact if the Reeb orbit $v_\epsilon$ did not exist. This leads then to a contradiction. Assuming that $\Sigma_\epsilon$ is non-degenerate a closer inspection of the proof shows that a gradient flow line (in the sense of Floer) of the Rabinowitz action functional connecting the orbit $v_\epsilon$ and a maximum of an auxiliary Morse function on $\Sigma_\epsilon$ has to exists. Using the index formula in \cite[Proposition 4.1]{Cieliebak_Frauenfelder_Restrictions_to_displaceable_exact_contact_embeddings} and the $\mu$-grading for Morse-Bott homology \cite[Appendix A]{Cieliebak_Frauenfelder_Restrictions_to_displaceable_exact_contact_embeddings} (see also the paragraph below equation (66) therein) this translates to
\bea
1&=\mu(v_\epsilon)-\mu(\max)\\
&=\CZ(v_\epsilon)+\eta(v_\epsilon)-\tfrac12-\underbrace{\CZ(\max)}_{=0}-\tfrac12(2N-1)\\
\eea
where $\eta(v_\epsilon)\in\{0,1\}$. This summand  is due to the fact that a critical point on the critical manifold represented by the periodic orbit $v_\epsilon$ has Morse index $0$ or $1$. The conventions for the Conley-Zehnder index in \cite{Cieliebak_Frauenfelder_Restrictions_to_displaceable_exact_contact_embeddings} agree with the ones here, see \cite[Equation (60)]{Cieliebak_Frauenfelder_Restrictions_to_displaceable_exact_contact_embeddings}. Therefore, we conclude
\bea
\CZ(v_\epsilon)\in\{N,N+1\}\;.
\eea

Thus, we conclude that  $\gamma_\epsilon=\pi(v_\epsilon)$ has Morse index $N$ or $N+1$ under the assumption that $\L^E_\epsilon$ is Morse-Bott.  

If $\L^E_\epsilon$ is degenerate we choose a sequence of compactly supported $C^\infty$-small perturbations $f_n:T^*S^N\to\R$ such that the  action functional $\L_\epsilon^{E,f_n}$ corresponding to the Lagrangian  $L_\epsilon+f_n+E$ is Morse-Bott, we find by our previous discussion a sequence $v_\epsilon^n$ of critical points of $\L^{E,f_n}_\epsilon$ such that all $v_\epsilon^n$ have period uniformly bounded from above by $e(\Sigma_\epsilon)+\delta$ for some small $\delta>0$, energy $E$, and Morse index $N$ or $N+1$. Since $f_n$ is $C^\infty$-small and the period of $v_\epsilon^n$ is uniformly bounded (see Lemma \ref{lemma:Ham_period_vs_Reeb_period}) the sequence $(v_\epsilon^n)$ converges and thus, we obtain a critical point $\gamma_\epsilon:\R/T\Z\to\Omega$ of $\L^E_\epsilon$ with 
\beq
\Lambda(E)\tau_\epsilon\leq e(\Sigma_\epsilon)+\delta,\quad E_\epsilon(\gamma_\epsilon)=E, \quad \Morse(\gamma_\epsilon)\leq N+1\;.
\eeq
Moreover, we can choose $\delta$ as small as we like. Let us summarize this discussion.

\begin{Prop}\label{prop:approx_sequence}
For any $\epsilon>0$ and $E>\max_{\overline{\Omega}}V$ there exists a critical point $(\Gamma_\epsilon,\tau_\epsilon)$ of $\L^E_\epsilon$ with 
\bea
&\tau_\epsilon\leq\frac{2(2E-2\min_{\overline{\Omega}} V)^{1/2}\cdot\mathrm{diam}(\Omega)}{\Lambda(E)},\\[2ex]
&E_\epsilon\big(\Gamma_\epsilon(\tfrac{t}{\tau_\epsilon})\big)=E,\\[2ex]
&\Morse\bigl(\Gamma_\epsilon;\L^E_\epsilon|_{H^1\times\{\tau_\epsilon\}}\bigr)\leq N+1\;.
\eea
\end{Prop}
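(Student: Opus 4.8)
The plan is to work entirely on the Hamiltonian side and then transport the conclusion back by Legendre duality. Under Legendre duality the critical points $(\Gamma_\epsilon,\tau_\epsilon)$ of $\L^E_\epsilon$ with energy $E$ correspond bijectively, with matching period, to periodic orbits of the Hamiltonian vector field $X_\epsilon$ of $H_\epsilon$ (equivalently $K_\epsilon$) lying on $\Sigma_\epsilon=\{K_\epsilon=E\}$; so it suffices to produce a periodic orbit of $X_\epsilon$ on $\Sigma_\epsilon$ whose period obeys the stated bound and whose Conley--Zehnder index is at most $N+1$, because $\Morse\bigl(\Gamma_\epsilon;\L^E_\epsilon|_{H^1\times\{\tau_\epsilon\}}\bigr)$ equals the Conley--Zehnder index of the corresponding orbit by Viterbo's theorem \cite{Viterbo_A_new_obstruction_to_embedding_Lagrangian_tori}. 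The energy identity $E_\epsilon(\Gamma_\epsilon(\cdot/\tau_\epsilon))=E$ is then automatic since the orbit lies on $\Sigma_\epsilon$ and the flows of $H_\epsilon$ and $K_\epsilon$ agree there.

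For the period I would first apply Proposition~\ref{prop:uniform_contact_type_estimate}: it gives a primitive $\lambda_\epsilon$ of $\om$ restricting to a contact form on $\Sigma_\epsilon$ with $\lambda_\epsilon(X_\epsilon)\geq\Lambda(E)>0$, hence $X_\epsilon=r_\epsilon R_\epsilon$ with $r_\epsilon\geq\Lambda(E)$, so a closed Reeb orbit of period $T$ reparametrizes to an $X_\epsilon$-orbit of period $\tau_\epsilon\leq T/\Lambda(E)$. Next, since $\pi(\Sigma_\epsilon)\subset\Omega$ is displaceable in the base, $\Sigma_\epsilon$ is Hamiltonianly displaceable, and Lemma~\ref{lem:bound_on_displacement_energy} bounds its displacement energy by $2(2E-2\min_{\overline\Omega}V)^{1/2}\,\mathrm{diam}(\Omega)$. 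Schlenk's existence theorem then yields a closed Reeb orbit $v_\epsilon$ on $\Sigma_\epsilon$ with $T\leq e(\Sigma_\epsilon)$; combining the two estimates is exactly Lemma~\ref{lemma:Ham_period_vs_Reeb_period} and gives the first displayed inequality of the Proposition.

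For the index bound I would treat the Morse--Bott case first, i.e.\ assume $\Sigma_\epsilon$ is non-degenerate (all Reeb orbits isolated, with only the unavoidable eigenvalue $1$ of the linearized return map). Here I would reopen the proof of Schlenk's theorem in the form given in \cite[Theorem~4.9]{Cieliebak_Frauenfelder_Paternain_Symplectic_Topology}: the argument shows a certain Rabinowitz--Floer moduli space would be compact if $v_\epsilon$ did not exist, and a closer inspection extracts, in the presence of $v_\epsilon$, a Floer gradient line of the Rabinowitz action functional joining $v_\epsilon$ to a maximum of an auxiliary Morse function on $\Sigma_\epsilon$. The index formula of \cite[Proposition~4.1]{Cieliebak_Frauenfelder_Restrictions_to_displaceable_exact_contact_embeddings} together with the $\mu$-grading of Morse--Bott homology \cite[Appendix~A]{Cieliebak_Frauenfelder_Restrictions_to_displaceable_exact_contact_embeddings} then converts ``expected dimension $1$'' into $1=\CZ(v_\epsilon)+\eta(v_\epsilon)-\tfrac12-\tfrac12(2N-1)$ with $\eta(v_\epsilon)\in\{0,1\}$, whence $\CZ(v_\epsilon)\in\{N,N+1\}$ and the Morse index is at most $N+1$. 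For the general (degenerate) case I would perturb: choose compactly supported $C^\infty$-small $f_n$ on $T^*S^N$ making the functional attached to $L_\epsilon+f_n+E$ Morse--Bott, apply the above to obtain orbits $v_\epsilon^n$ of energy $E$, Morse index $\leq N+1$, and uniformly bounded period (via Lemma~\ref{lemma:Ham_period_vs_Reeb_period}), and pass to the limit, absorbing the resulting arbitrarily small slack $\delta$ in the period into the estimate.

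The main obstacle is the index estimate. The period bound and the energy identity are immediate once Proposition~\ref{prop:uniform_contact_type_estimate}, Lemma~\ref{lem:bound_on_displacement_energy} and Schlenk's theorem are in hand; but $\CZ(v_\epsilon)\leq N+1$ is not formal. It requires genuinely revisiting the proof of the existence theorem to produce the gradient line to the maximum in the Morse--Bott setting, and then carefully reconciling index conventions — the normalization of $\CZ$, the correction term $\eta(v_\epsilon)\in\{0,1\}$ coming from the Morse index of a point on the critical circle, and the grading shifts $-\tfrac12$ and $-\tfrac12(2N-1)$ — so that the computation yields precisely $\{N,N+1\}$. The perturbation-and-limit step, by contrast, is routine given the uniform period bound.
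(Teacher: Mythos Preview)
Your proposal is correct and follows essentially the same route as the paper: produce the periodic orbit via Proposition~\ref{prop:uniform_contact_type_estimate}, Lemma~\ref{lem:bound_on_displacement_energy} and Schlenk's theorem to get the period bound, then extract the Conley--Zehnder bound $\CZ(v_\epsilon)\in\{N,N+1\}$ by reopening the proof of \cite[Theorem~4.9]{Cieliebak_Frauenfelder_Paternain_Symplectic_Topology} in the non-degenerate case and applying the index formula of \cite[Proposition~4.1 and Appendix~A]{Cieliebak_Frauenfelder_Restrictions_to_displaceable_exact_contact_embeddings}, with a $C^\infty$-small perturbation and limit argument in the degenerate case. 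Your identification of the index computation as the substantive step is exactly right.
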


We now have all the ingredients to prove Theorem~\ref{thm:main} and Corollary~\ref{cor:main}.
\begin{proof}[\textsc{Proof of Theorem~\ref{thm:main}}]
We fix an energy value $E>\max_{\overline\Omega} V$ and consider the sequence $\{(\Gamma_\epsilon,\tau_\epsilon)\}$ given in Proposition~\ref{prop:approx_sequence}. 

We first show that the sequence $\{\tau_\epsilon\}$ is uniformly bounded from below by some constant $T_1>0$. Indeed, assume by contradiction that $\tau_\epsilon\to0$ up to a subsequence for $\epsilon\to0$. Then, up to taking a further subsequence, by Proposition~\ref{prop:Benci_improved_without_lower_period_bound} we infer that $\Gamma_\epsilon$ converges uniformly to a constant curve $\gamma\equiv q$ with $E(\gamma)=V(q)=E$ and such that $q$ is either a critical point of $V$ or $q\in\p\Omega$ and $\nabla V(q)=-a\nu(q)$ for some $a>0$. This contradicts the assumption $E>\max_{\overline\Omega} V$.

Hence, we have
\beq
0< T_1\leq \tau_\epsilon\leq T_2:=\frac{2(2E-2\min_{\overline{\Omega}} V)^{1/2}\cdot\mathrm{diam}(\Omega)}{\Lambda(E)}\,.
\eeq
By Proposition~\ref{prop:Benci_improved}, up to taking a further subsequence for $\epsilon\to0$, $(\Gamma_\epsilon,\tau_\epsilon)$ converges to some $(\Gamma,\tau)$ in $H^1(S^1;\R^N)\times\R_{>0}$, where $T_1\leq\tau_\epsilon\leq T_2$. Let $\mu$ be the measure given by Proposition~\ref{prop:Benci_improved}. By Proposition~\ref{prop:bounde_Morse_index} and by the uniform bound on the Morse index of $\Gamma_\epsilon$, the support of $\mu$ contains at most $N+1$ points. Therefore, by Proposition~\ref{prop:Benci_improved}, the $\tau$-periodic curve $\gamma(t):=\Gamma(\tfrac t\tau)$  is a $\tau$ periodic bounce orbit of the Lagrangian system given by $L$ with energy $E(\gamma)=E$ and at most  $N+1$ bounce points.
\end{proof}

\begin{proof}[\textsc{Proof of Corollary~\ref{cor:main}}]
If the potential $V$ is constant, say $V\equiv c$, then the solutions of the Euler-Lagrange equation of $L$ with energy $E>c$ are straight curves with constant positive velocity, and therefore each of them will eventually bounce on $\partial\Omega$. Let us now consider the nontrivial case in which
\beq\label{e:corollary_nontrivial}
\textstyle \max_{\overline\Omega}|\nabla V|>0,
\eeq
and let $\gamma$ be a periodic bounce orbit with energy $E(\gamma)=E>\max_{\overline\Omega}V$ 
and no bounce points. Then $\gamma$ is a smooth, periodic solution of the Euler-Lagrange equation
\beq
\gamma''+\nabla V(\gamma)=0\,.
\eeq
Since $\gamma(t)\in\overline\Omega$ for each $t\in\R$, we can estimate
\bea\label{e:gamma_diam}
\mathrm{diam}(\overline\Omega)
&\geq
|\gamma(t)-\gamma(0)|\\
&=
\biggl|
\int_0^t
\gamma'(s)
\;
ds
\biggr|\\
&
\geq
|\gamma'(0)|\cdot|t|
-
\biggl|
\int_0^t
\!
\int_0^s
\gamma''(r)
\;dr\;ds
\biggr|\\
&
\geq
|\gamma'(0)|\cdot|t|
-
\int_0^t
\!
\int_0^s
|\nabla V(\gamma(r))|
\;dr\;ds\\
&
=
2(E-V(\Gamma(0))) |t|   
-
\int_0^t
\!
\int_0^s
|\nabla V(\gamma(r))|
\;dr\;ds\\
&
\geq
\textstyle 2(E-\max_{\overline\Omega}V)^{1/2} |t|   - \tfrac12(\max_{\overline\Omega}|\nabla V|  )  t^2\\
&
\geq
\textstyle 2(E-\max_{\overline\Omega}V)^{1/2} t   - \tfrac12(\max_{\overline\Omega}|\nabla V|  )  t^2.\\
\eea
By~\eqref{e:corollary_nontrivial} the above is possible only if
\beq
\bigl[2(\textstyle E-\max_{\overline\Omega}V)^{1/2}\bigr]^2
-4 \mathrm{diam}(\overline\Omega)\tfrac12\textstyle\max_{\overline\Omega}|\nabla V|\leq0,
\eeq
which can be rewritten as
\beq
E\leq 
\textstyle\max_{\overline\Omega}V
+
\tfrac12\mathrm{diam}(\overline\Omega)\textstyle\max_{\overline\Omega}|\nabla V|.
\eeq
This implies that all periodic bounce orbits with energy 
$E>\max_{\overline\Omega}V
+
\tfrac12\mathrm{diam}(\overline\Omega)\textstyle\max_{\overline\Omega}|\nabla V|$ have at least one bounce point.
\end{proof}

%
%
\bibliographystyle{amsalpha}
\bibliography{../../../Bibtex/bibtex_paper_list}
\end{document}